\newtheorem{theorem}{Theorem}[section]
\newtheorem{lemma}[theorem]{Lemma}
\theoremstyle{definition}
\newtheorem{proposition}[theorem]{Proposition}
\newtheorem{corollary}[theorem]{Corollary}
\newcommand{\X}{\mathbf{X}}
\newcommand{\Z}{\mathbf{Z}}
\newcommand{\x}{\mathbf{x}}
\theoremstyle{remark}
\numberwithin{equation}{section}
\begin{document}

\title{Ergodicity and steady state analysis for Interference Queueing Networks}

\author{Sayan Banerjee}
\address{Department of Statistics and Operations Research, University of North Carolina, Chapel Hill}
\email{sayan@email.unc.edu}

\author{Abishek Sankararaman}
\address{Electrical Engineering and Computer Sciences Department, University of California, Berkeley}
\email{abishek@berkeley.edu}

\subjclass{Primary 60K25, 60K35; Secondary 60B10, 90B18, 28D05}
\date{}


\keywords{Wireless networks, interference, queues, Coupling-From-The-Past, stationary distribution, strongly mixing, ergodicity}

\begin{abstract}
We analyze an interacting queueing network on $\mathbb{Z}^d$ that was introduced in \cite{interf_q} as a model for wireless networks. We show that the marginals of the minimal stationary distribution have exponential tails. This is used to furnish asymptotics for the maximum steady state queue length in growing boxes around the origin. We also establish a decay of correlations which shows that the minimal stationary distribution is strongly mixing, and hence, ergodic with respect to translations on $\mathbb{Z}^d$.
\end{abstract}

\maketitle

\section{Introduction and  Model}
\label{sec:model}


In this paper, we consider the Interference Queueing Network model introduced in \cite{interf_q}. 
The model consists of an infinite collection of queues, each placed at a grid point of a $d$ dimensional grid $\mathbb{Z}^d$. 
Each queue has arrivals according to an independent Poisson process with intensity $\lambda$. The departures across queues are however coupled by the \emph{interference} they cause to each other, parametrized by a sequence  $\{a_i\}_{i \in \mathbb{Z}^d}$, where $a_i \geq 0$ and $a_i = a_{-i}$, for all $i \in \mathbb{Z}^d$ and $\sum_{i \in \mathbb{Z}^d}a_i < \infty$. For ease of exposition, and without loss of generality, we shall assume that $a_0 = 1$. 
The state of the network at time $t \in \mathbb{R}$ is encoded by the collection of processes $\{X_i(t)\}_{i \in \mathbb{Z}^d} \in \mathbb{N}_0^{\mathbb{Z}^d}$, where $X_i(t)$ denotes the queue length at site $i \in \mathbb{Z}^d$ at time $t$. Conditional on the queue lengths $\{X_i(t)\}_{i \in \mathbb{Z}^d}$, the departures across queues are independent with rate of departure from any queue $i \in \mathbb{Z}^d$ at time $t \in \mathbb{R}$ given by $\frac{X_i(t)}{\sum_{j \in \mathbb{Z}^d}a_j X_{i-j}(t)}$. \textcolor{black}{Here, and in the rest of the paper, we adopt the convention that $0/0 = 0$. }
Under these conditions, Proposition 4.1 of \cite{interf_q} gives that the process is well-defined in a path-wise sense, even when 	the interference sequence has infinite support, namely $a_i>0$ for infinitely many $i \in \mathbb{Z}^d$.
Thus, the evolution of the queues are coupled, in a translation-invariant fashion, where the service rate at a queue is lower if the queue lengths of its neighbors, as measured by the interference sequence $(a_i)_{i \in \mathbb{Z}^d}$, are larger.
\\

Formally, we work on a probability space containing the collection of processes $(\mathcal{A}_i,\mathcal{D}_i)_{i \in \mathbb{Z}^d}$, where $\{\mathcal{A}_i\}_{i \in \mathbb{Z}^d}$ are independent Poisson Point Processes (PPP) on $\mathbb{R}$ with intensity $\lambda$; and $\{\mathcal{D}_i\}_{i \in \mathbb{Z}^d}$ are independent PPP of unit intensity on $\mathbb{R} \times [0,1]$. For each $i \in \mathbb{Z}^d$, the epochs of $\mathcal{A}_i$ denote the instants of arrivals to queue $i$. Similarly, any atom of the process $\mathcal{D}_i$ of the form $(t,u) \in \mathbb{R}\times [0,1]$ denotes an event of potential departure from queue $i$; precisely, a departure occurs at time $t$ from queue $i$ if and only if \textcolor{black}{$u \le \frac{X_i(t)}{\sum_{j \in \mathbb{Z}^d}a_{j}X_{i-j}(t)}$}. Thus, the queue length process $(\{X_i(t)\}_{i \in \mathbb{Z}^d})_{t \in \mathbb{R}}$ is a factor of the driving sequences $(\mathcal{A}_i,\mathcal{D}_i)_{i \in \mathbb{Z}^d}$. A proof of existence of the process is given in \cite{interf_q}.
\\

This model was introduced in \cite{interf_q}, as a means to study the dynamics in large scale wireless networks \cite{sbd_tit}.
In two or three dimensions, this model has a physical interpretation of a wireless network. 
Each grid point (queue) represents a `region of geographical space',
and each customer represents a wireless link, i.e., a transmitter-receiver pair.
For analytical simplicity, the link length (the distance between transmitter and receiver) is assumed to be $0$, so that a single customer represents both a transmitter and receiver.
The stochastic system models the spatial birth-death dynamics of the wireless network, where links arrive randomly in space, with the transmitter having an independent file of exponentially distributed size that it wants to communicate to its receiver.
A link (customer) subsequently exits the network after the transmitter finishes transmitting the file to its receiver.
The duration for which a transmitter transmits (i.e., a customer stays in the network) is governed by the rate at which a transmitter can transmit the file.
As wireless is a shared medium, the rate of file transfer at a link depends on the geometry of nearby concurrently transmitting links \textemdash if there are a lot of links in the vicinity, i.e., large interference, the rate of file transfer is lowered. 
In our system, the instantaneous rate of file transfer at a link in queue $i \in \mathbb{Z}^d$ is equal to the Signal to Noise plus Interference Ratio $\frac{1}{\sum_{i \in \mathbb{Z}^d}a_i	x_{i-j}(t)}$. Here, all transmitters transmit at unit power which is received at its corresponding receiver without attenuation (numerator is $1$). However, the corresponding receiver also receives power from other neighboring transmitters that reduces the rate of transmission. The interfering power is attenuated through space, with the attenuation factor given by the interference sequence $\{a_i\}_{i \in \mathbb{Z}^d}$.
As there are $x_i(t)$ links in queue $i\in\mathbb{Z}^d$, and they all have independent file sizes, the total rate of departure at a queue is then $\frac{x_i(t)}{\sum_{j \in \mathbb{Z}^d}a_jx_{i-j}(t)}$.
We refer the reader to \cite{interf_q}, \cite{sbd_tit}, \cite{stolyar_conjecture} for more information on  the origin of this stochastic model and its applications to understanding wireless networks. \\

\textcolor{black}{Mathematically, this model lies at the interface between queueing networks and interacting particle systems. Most well known queueing networks with interactions between servers, like the Join-the-shortest-queue policy and Power-of-$d$-choices policy \cite{mitzenmacher2001power,van2018scalable}, incorporate global interactions between servers and the interaction between any two fixed servers approaches zero (in a suitable sense) as the system size increases. On the other hand, well known interacting particle systems like the exclusion process, zero range process, contact process, voter model, Ising model, etc., \cite{liggett2012interacting} have strong nearest neighbor interactions but they often have explicit stationary measures and/or locally compact state space (each site can take one of finitely many values/configurations). This model has nearest neighbor interactions as well as locally non-compact state space (queue lengths are unbounded), thus making many tools from either of the above two broad fields inapplicable. In particular, stationary measures, if they exist, are far from explicit and natural aspects of the stationary dynamics of the process, like uniqueness of stationary measure, decay of correlations, typical and extremal behavior of queue lengths, and convergence rates to stationarity from arbitrary initial configurations, are non-trivial to analyze and quantify. Moreover, the ratio-type functional dependence of the service rates on neighboring queues makes obtaining quantitative estimates  challenging and most of the analysis necessarily has to rely on `soft' arguments using qualitative traits of the model. Recently, motivated by this model, the first author revisited an interacting particle system called the Potlatch process \cite{liggett1981ergodic}, which shares many aspects in common with this model, but the simpler functional form of rates enables one (see \cite{banerjee2020rates}) to quantify rates of convergence (locally and globally) to equilibrium. Similar models have also appeared in the economics literature to analyze opinion dynamics on social networks \cite{acemouglu2013opinion}.}
\\


The paper \cite{interf_q} established stability criteria, namely that if $\lambda < \frac{1}{\sum_{j \in \mathbb{Z}^d}a_j}$, then there exists a translation invariant (on $\mathbb{Z}^d$) stationary distribution for the queue lengths. 
\textcolor{black}{The crucial property of the dynamics noted in \cite{interf_q} was the following form of monotonicity: if at time $t \in \mathbb{R}$, there are two initial configurations $\{X_i(t)\}_{i \in \mathbb{Z}^d}$ and $\{X_i^{'}(t)\}_{i \in \mathbb{Z}^d}$ such that for all $i \in \mathbb{Z}^d$, $X_i(t) \leq X_i^{'}(t)$ (assuming the system starts at time $t \in \mathbb{R}$), and if the processes $\{X_i(s) : i \in \mathbb{Z}^d, s \ge t\}$ and $\{X_i^{'}(s) : i \in \mathbb{Z}^d, s \ge t\}$ are constructed using the same arrival and departure PPP $(\mathcal{A}_i,\mathcal{D}_i)_{i \in \mathbb{Z}^d}$, then under this coupling, almost surely, for all $s \geq t$ and all $i \in \mathbb{Z}^d$, $X_i(s) \leq X_i^{'}(s)$. }
Monotonicity is then used to define the following notion of stability. For each $t \geq 0$ and $s \geq -t$, denote by $\{X_{i;t}(s)\}_{i \in \mathbb{Z}^d} \in \mathbb{N}_0^{\mathbb{Z}^d}$ the queue lengths at time $s$, when the system was started with all queues being empty at time $-t$, i.e., for all $i \in \mathbb{Z}^d$, $X_{i;t}(-t) = 0$. 
Monotonicity implies that under the above (synchronous) coupling, such that, almost surely, for all $s \in \mathbb{R}$, for all $i \in \mathbb{Z}^d$, the map $t \mapsto X_{i;t}(s)$ is non-decreasing.
The stationary version of the process is then defined as $\{X_{i;\infty}(s)\}_{i \in \mathbb{Z}^d}$, where for any $s \in \mathbb{R}$ and $i \in \mathbb{Z}^d$, $X_{i;\infty}(s) := \lim_{t\to\infty}X_{i;t}(s)$ in the almost sure sense. \textcolor{black}{It was shown in \cite{interf_q} (see Proposition 4.3 there) that $\{X_{i;\infty}(s)\}_{i \in \mathbb{Z}^d}$ is indeed a stationary solution to the dynamics which is \emph{minimal} in the sense that any other stationary solution stochastically dominates this solution in a coordinate-wise sense for all time. We will refer to this coupled `backward' construction of the process $\{X_{i;t}(s)\}_{i \in \mathbb{Z}^d} : s \ge -t\}$ (for $t \ge 0$), as well as $\{X_{i;\infty}(s)\}_{i \in \mathbb{Z}^d} : s \in \mathbb{R}\}$, as the ``Coupling-From-The-Past" (CFTP) construction.}
\\

 In the rest of the paper, we shall assume that $\lambda <  \frac{1}{\sum_{j \in \mathbb{Z}^d}a_j}$ and that the process $\{X_i(t) : i \in \mathbb{Z}^d, t \in \mathbb{R}\}$ is stationary and distributed according to the unique minimal stationary solution to the dynamics. 
Proposition 4.3 in \cite{interf_q} gives that for any $i \in \mathbb{Z}^d$ and $t \in \mathbb{R}$, the steady state queue length satisfies $\mathbb{E}[X_i(t)] = \frac{\lambda}{1-\lambda\sum_{j \in \mathbb{Z}^d}a_j}$. Subsequently, \cite{stolyar_conjecture} established that for all $\lambda <  \frac{1}{\sum_{j \in \mathbb{Z}^d}a_j}$, for all $i \in \mathbb{Z}^d$, $t \in \mathbb{R}$, $\mathbb{E}[(X_i(t))^2] < \infty$.\\

\textcolor{black}{In this paper, we show that the marginals of the minimal stationary distribution, in fact, has exponential tails (Theorem \ref{thm:exp_bound}). This is used to obtain asymptotics for the maximum queue length in steady state in growing boxes around the origin (Corollary \ref{max}). We further show a decay of correlations between the queue lengths of two sites as the distance between the sites increases (Theorem \ref{thm:corr_decay_general}). This, in turn, implies that the stationary distribution is strongly mixing, and thus, ergodic with respect to translations on $\mathbb{Z}^d$. An ergodic theorem is presented in Corollary \ref{meanET}.}

\section{Main Results}

\subsection{Exponential moments and stationary distribution tail bounds}
The first result concerns the existence of exponential moments for queue lengths which, in turn, yields two-sided exponential tail bounds on the marginals of the minimal stationary distribution.

\begin{theorem}
For all $\lambda < \frac{1}{\sum_{j \in \mathbb{Z}^d}a_j}$, there exists a constant $c_0 > 0$, such that for all $c \in [0,c_0)$, all $i \in \mathbb{Z}^d$ and $t \in \mathbb{R}$, 
\begin{equation}\label{expmom}
\mathbb{E}[e^{c X_i(t)}] < \infty.
\end{equation}
Moreover, for all $\lambda < \frac{1}{\sum_{j \in \mathbb{Z}^d}a_j}$, there exist constants $c_1,c_2,x_0 > 0$, such that, for all $x \geq x_0$, $i \in \mathbb{Z}^d$ and $t \in \mathbb{R}$,
	\begin{align}\label{probup}
	e^{-c_1x} \leq \mathbb{P}[X_i(t) \geq x] \leq e^{-c_2x}.
	\end{align}
	\label{thm:exp_bound}
\end{theorem}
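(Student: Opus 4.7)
The two-sided tail bound \eqref{probup} reduces to \eqref{expmom}: the upper tail follows from Markov's inequality applied to $e^{cX_i(t)}$, while the lower tail comes from a direct stochastic domination. Specifically, since $a_0 = 1$ gives $Y_i(t) := \sum_j a_j X_{i-j}(t) \ge X_i(t)$, the instantaneous departure rate at every queue is at most $1$. Under the CFTP construction with common PPPs, $X_i(t)$ stochastically dominates an $\mathrm{M}/\mathrm{M}/1$ queue with arrival rate $\lambda$ and service rate $1$ (using the same $\mathcal{A}_i$, and treating every point of $\mathcal{D}_i$ as a departure epoch for the M/M/1). Since $\lambda < 1$ (as $\sum_j a_j \ge a_0 = 1$), the dominated queue has $\mathrm{Geom}(1-\lambda)$ stationary distribution, so $\mathbb{P}[X_i(t) \ge x] \ge \lambda^x$, giving the lower bound with $c_1 = \log(1/\lambda)$.

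For \eqref{expmom}, the CFTP monotonicity and monotone convergence reduce the problem to showing $\sup_{t \ge 0} \mathbb{E}[e^{c X_{0;t}(0)}] < \infty$ for some $c > 0$. The strategy is to apply Dynkin's formula to the test function $x \mapsto e^{cx}$ along the CFTP process, obtaining
\begin{align*}
\frac{d}{ds} \mathbb{E}[e^{c X_{0;t}(s)}] = \lambda(e^c-1)\,\mathbb{E}[e^{c X_{0;t}(s)}] - (1 - e^{-c})\,\mathbb{E}\!\left[\tfrac{X_{0;t}(s)}{Y_{0;t}(s)}\, e^{c X_{0;t}(s)}\right],
\end{align*}
and then to show the right-hand side becomes non-positive once $\mathbb{E}[e^{c X_{0;t}(s)}]$ exceeds a threshold $K$ independent of $t$ and $s$. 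Since $\mathbb{E}[e^{cX_{0;t}(-t)}] = 1$ and, by monotonicity, $t \mapsto \mathbb{E}[e^{cX_{0;t}(0)}]$ is non-decreasing, such a drift bound pins the quantity below $K$ uniformly. Equivalently, the effective service rate $\mathbb{E}[\tfrac{X_0}{Y_0} e^{cX_0}]/\mathbb{E}[e^{c X_0}]$ must exceed $\lambda e^c$ above the threshold, which is a condition one can hope to enforce for $c$ small enough since the ratio ought to be close to $1$ when the exponential weighting forces $X_0$ to be large.

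The main obstacle is controlling the ratio $X_0/Y_0 = 1/(1 + Z_0/X_0)$, where $Z_0 := \sum_{j \ne 0} a_j X_{-j}$ is the interference from neighbors. Translation invariance and $\sum_j a_j < \infty$ give $\mathbb{E}[Z_0] = \bigl(\sum_{j\ne 0} a_j\bigr)\mathbb{E}[X_0] < \infty$, so heuristically when $X_0$ is large, $Z_0/X_0$ is small with high probability; but making this rigorous is delicate because $X_0$ and $Z_0$ may be positively correlated. My plan is to split on $\{Z_0 \le M\}$ versus $\{Z_0 > M\}$: on the former event, $X_0/Y_0 \ge X_0/(X_0 + M)$ yields a drift contribution close to the full $(1 - e^{-c})e^{cX_0}$ for $X_0 \gg M$; on the latter, Markov's inequality together with the finite first moment of $Z_0$ bounds the lost contribution. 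Balancing $M$ large and $c$ small then closes the drift inequality. Crucially, this scheme relies on the stationary first moment $\mathbb{E}[X_j] = \lambda/(1 - \lambda\sum_j a_j)$ from \cite{interf_q}, which is precisely where the subcriticality $\lambda < 1/\sum_j a_j$ enters the argument. If a single-site analysis proves too weak to control the conditional expectation of $Z_0$ given $X_0$, one fallback is to work with a multi-site Lyapunov functional $V = \sum_i w_i e^{cX_i}$ with rapidly decaying weights, absorbing the cross-terms into the summable tail of $(a_i)$.
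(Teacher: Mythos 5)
Your lower-bound argument (domination of an independent $M/M/1$ queue with arrival rate $\lambda$ and unit service rate under the synchronous CFTP coupling) is exactly the paper's proof of the lower bound in \eqref{probup}, and the reduction of the upper tail bound to \eqref{expmom} via Markov's inequality also matches. The gap is in the core of the argument, the proof of \eqref{expmom} itself. Your drift scheme requires a lower bound of the form $\mathbb{E}\bigl[\tfrac{X_0}{Y_0}e^{cX_0}\bigr] \ge \lambda e^{c}\,\mathbb{E}[e^{cX_0}]$ once $\mathbb{E}[e^{cX_0}]$ is large, and the proposed splitting on $\{Z_0\le M\}$ versus $\{Z_0>M\}$ does not deliver it: the lost contribution on $\{Z_0>M\}$ is $\mathbb{E}[e^{cX_0}\mathbf{1}_{Z_0>M}]$, which carries the exponential weight inside the expectation, so Markov's inequality with $\mathbb{E}[Z_0]<\infty$ gives nothing (bounding it by Cauchy--Schwarz needs $\mathbb{E}[e^{2cX_0}]$, which is circular). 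The heuristic that $Z_0/X_0$ is small under the exponential tilting when $X_0$ is large is precisely what fails to be justified: under the synchronous dynamics the neighboring queues are expected to be positively correlated with $X_0$, and no available estimate controls the conditional size of $Z_0$ given $X_0$ large at the exponential scale. The fallback functional $\sum_i w_i e^{cX_i}$ is only sketched; because the service rate is a ratio, the cross terms are not linear in the neighbors and do not simply absorb into the summable tail of $(a_i)$.

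The paper resolves exactly this difficulty by \emph{not} working at a single site and \emph{not} working directly with exponential test functions. It truncates the interference sequence, restricts the dynamics to a torus $\mathbb{T}_n$ (and to boxes $\mathbb{B}_n$, to sandwich and later pass to the limit by monotonicity), and applies rate conservation to the polynomial Lyapunov function $V(\mathbf{y})=\sum_{i\in\mathbb{B}_n} y_i^{k+1}$. The key step (Lemma \ref{lem:internal2}) is a deterministic inequality, proved with Cauchy--Schwarz and Young's inequality together with the exact translation symmetry of the torus, showing that the spatially averaged weighted service rate satisfies $\sum_i R_i y_i^j \ge \tfrac{1}{\sum_k a_k}\sum_i y_i^j$; this is how the ratio $X_i/Y_i$ is controlled, by averaging over sites rather than conditioning at one site. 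This yields the moment recursion $D(k+1)\mu_k^{(n)} \le \sum_{j=0}^{k-1}\binom{k+1}{j}\mu_j^{(n)}$ with $D=\bigl(\tfrac{1}{\sum_j a_j}-\lambda\bigr)/(\lambda+1)$, which, summed against $c^k/k!$, gives $\mathbb{E}[e^{cY_0^{(n),L}(0)}]\le D/(D-ce^{c})$ uniformly in $n$; monotone convergence in $n$ and then in the truncation $L$ transfers the bound to $X_0$. To repair your proof you would need either a substitute for this spatial-averaging inequality in the single-site setting (which the correlation structure obstructs) or to redo your multi-site functional in a finite-volume, translation-symmetric setting along the paper's lines.
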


The above theorem can be used to derive the following asymptotics for the maximum queue length in steady state in growing boxes around the origin.

\begin{corollary}\label{max}
For every $\lambda < \frac{1}{\sum_{j\in\mathbb{Z}^d}a_j}$, there exist positive constants $C_1, C_2$, such that for any $t \in \mathbb{R}$,
$$
\lim_{N \rightarrow \infty}\mathbb{P}\left(C_1 \log N \le \max_{i \in \mathbb{Z}^d : \|i\|_{\infty} \le N} X_i(t) \le C_2 \log N\right) =1.
$$
\end{corollary}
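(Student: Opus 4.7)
The plan is to prove the two one-sided statements $\mathbb{P}(\max_i X_i(t) > C_2 \log N) \to 0$ and $\mathbb{P}(\max_i X_i(t) < C_1 \log N) \to 0$ separately. The upper bound is immediate from \eqref{probup} by a union bound, while the lower bound requires a second-moment argument coupled with decoupling of far-apart sites.

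For the upper bound, I would set $C_2 := (d+1)/c_2$ with $c_2$ from \eqref{probup}. Since the box $\{i \in \mathbb{Z}^d : \|i\|_\infty \le N\}$ contains $(2N+1)^d$ lattice points, stationarity combined with a union bound yields
\begin{equation*}
\mathbb{P}\bigl(\max_{\|i\|_\infty \le N} X_i(t) > C_2 \log N\bigr) \le (2N+1)^d e^{-c_2 C_2 \log N} = O(N^{-1}) \to 0.
\end{equation*}

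For the lower bound, I would use a Paley--Zygmund argument. Fix $C_1 \in (0, d/c_1)$ with $c_1$ from \eqref{probup}, set $M_N := C_1 \log N$, and define the counting variable $Z_N := \sum_{\|i\|_\infty \le N} \mathbf{1}\{X_i(t) \ge M_N\}$. By stationarity and the lower tail in \eqref{probup}, $\mathbb{E}[Z_N] \ge (2N+1)^d e^{-c_1 M_N} \gtrsim N^{d - c_1 C_1} \to \infty$. Since $\{\max_i X_i(t) \ge M_N\} = \{Z_N \ge 1\}$, it suffices to show $\mathbb{E}[Z_N^2] = O(\mathbb{E}[Z_N]^2)$. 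I would split $\mathbb{E}[Z_N^2] = \sum_{i,j} \mathbb{P}(X_i(t) \ge M_N, X_j(t) \ge M_N)$ at a distance cutoff $r_N$ chosen to grow sub-polynomially in $N$. The near-diagonal contribution is bounded crudely by $(2N+1)^d r_N^d \cdot \mathbb{P}(X_0(t) \ge M_N) \lesssim N^{d - c_1 C_1} r_N^d$, which is $o(\mathbb{E}[Z_N]^2)$ whenever $r_N = o(N^{1 - c_1 C_1/d})$.

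The main obstacle is the far-diagonal estimate: for $\|i-j\|_\infty > r_N$, I need $\mathbb{P}(X_i(t) \ge M_N, X_j(t) \ge M_N)$ to be close enough to $\mathbb{P}(X_0(t) \ge M_N)^2$ that the total contribution is $(1+o(1))\mathbb{E}[Z_N]^2$. Theorem \ref{thm:corr_decay_general} supplies decay of correlations for the queue lengths themselves, but does not immediately control covariances of the tail events $\{X_i \ge M_N\}$ whose thresholds diverge. My proposed workaround is to exploit the CFTP construction: for each site $i$, approximate $X_i(t)$ by a truncated surrogate $\tilde{X}_i^{(T_N,L_N)}(t)$ equal to the queue length at $i$ in the system started empty at time $-T_N$ and driven only by the PPPs $(\mathcal{A}_k, \mathcal{D}_k)$ at sites $k$ within distance $L_N$ of $i$. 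For $\|i-j\|_\infty > 2 L_N$, the surrogates $\tilde{X}_i^{(T_N,L_N)}$ and $\tilde{X}_j^{(T_N,L_N)}$ are functionals of disjoint portions of the driving data and hence independent. Using the monotone CFTP convergence together with the exponential moment bound \eqref{expmom} and the summability of $\{a_i\}$, I would choose $T_N, L_N = \Theta(\log N)$ with large enough constants so that $\mathbb{P}(\tilde{X}_i^{(T_N,L_N)}(t) \ne X_i(t))$ decays like a large negative power of $N$---small enough to replace the joint tail probabilities by the products of marginals with negligible error. Paley--Zygmund then delivers $\mathbb{P}(Z_N \ge 1) \to 1$, completing the proof of the lower bound.
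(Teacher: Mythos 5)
Your upper bound is exactly the paper's argument (union bound plus the upper tail of \eqref{probup}), and that half is fine. The problem is the lower bound. Its entire weight rests on the claim that, with $T_N, L_N = \Theta(\log N)$, the truncated CFTP surrogate satisfies $\mathbb{P}\bigl(\tilde{X}_i^{(T_N,L_N)}(t) \ne X_i(t)\bigr) \le N^{-p}$ for a large power $p$ (you need roughly $p > 2c_1C_1$ so that the replacement error is $o\bigl(\mathbb{P}(X_0 \ge M_N)^2\bigr)$ per pair). Monotonicity and the CFTP construction do give $\tilde{X}_i^{(T,L)} \le X_i$ and almost sure convergence as $T,L \to \infty$, but they give \emph{no rate}: the paper's own convergence statements (Proposition 7.3 and the proof of Proposition 4.3 of \cite{interf_q}, and the soft limit arguments in the proof of Theorem \ref{thm:corr_decay_general} here) are purely qualitative, and the exponential moment bound \eqref{expmom} controls the size of $X_i$, not the discrepancy $X_i - \tilde{X}_i$, which is governed by how perturbations from distant sites and from the empty initial condition at time $-T_N$ propagate through the ratio-form rates. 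Quantifying that propagation is precisely the kind of estimate the paper flags as out of reach for this model (it is the subject of separate work for the simpler Potlatch process), so this step is a genuine gap, not a routine verification. A secondary issue: showing $\mathbb{E}[Z_N^2] = O(\mathbb{E}[Z_N]^2)$ only gives $\liminf_N \mathbb{P}(Z_N \ge 1) > 0$; for probability tending to $1$ you need $\mathbb{E}[Z_N^2] = (1+o(1))\,\mathbb{E}[Z_N]^2$ (or a variance bound $o(\mathbb{E}[Z_N]^2)$), which again hinges on the same quantitative decoupling.

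The paper avoids all of this: the lower tail in \eqref{probup} is itself proved in Subsection \ref{lowproof} by coupling the stationary system with a family of \emph{independent} stationary $M/M/1$ queues $\{Q_{i;\infty}\}$ (arrival rate $\lambda$, service rate $1$) satisfying $X_i(t) \ge Q_{i;\infty}(t)$ almost surely, since each service rate is at most $1$. The maximum of $(2N+1)^d$ i.i.d.\ geometric-tailed variables exceeds $C_1\log N$ with probability $\ge 1 - (1-\lambda^{C_1\log N})^{(2N+1)^d} \to 1$ for any $C_1 < d/\log(1/\lambda)$, so independence comes for free from the minorant and no second-moment or correlation-decay input is needed. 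If you want to salvage your route, replace the surrogate-truncation step by this i.i.d.\ minorant; as written, the decoupling estimate you rely on is unproven and not derivable from the results available in the paper.
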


Theorem \ref{thm:exp_bound} and Corollary \ref{max} are proved in Section \ref{sec:proof_exponential_moment}.

\subsection{Correlation Decay and Mixing of the Stationary Queue Length Process}

%

The main result of this section shows that the stationary queue lengths at distinct sites show a decay of correlations in space as the distance between the sites increases. This, in fact, shows that the minimal stationary distribution is strongly mixing, and thus, ergodic with respect to translations on $\mathbb{Z}^d$. Before stating the results, we briefly recall some notions from ergodic theory.\\


Let $\{X_i\}_{i \in \mathbb{Z}^d} \in \mathbb{N}_0^{\mathbb{Z}^d}$ be a sample from the minimal stationary distribution of the dynamics. The law of $\X := \{X_i\}_{i \in \mathbb{Z}^d}$ induces a natural probability measure $\boldsymbol{\mu}$ on $\left(\mathbb{N}_0^{\mathbb{Z}^d}, \mathcal{B}\left(\mathbb{N}_0^{\mathbb{Z}^d}\right)\right)$ given by $\boldsymbol{\mu}(A) := \mathbb{P}\left(\X \in A\right), A \in \mathcal{B}\left(\mathbb{N}_0^{\mathbb{Z}^d}\right)$.
For any $n \in \mathbb{N}$, and $h \in \{1,\cdots,d\}$, define $n\mathbf{e}_h:= (\underbrace{0,\cdots,0}_{h-1\text{}},n,\underbrace{0,\cdots,0}_{d-h})$, namely the vector in $\mathbb{Z}^d$ of all $0$'s except the $h$th coordinate that takes value $n$. For $h \in \{1,\cdots, d\}$ and $i \in \mathbb{Z}^d$, let $\theta_h(i) := i + \mathbf{e}_h$ denote the unit translation map on $\mathbb{Z}^d$ along the $h$-th coordinate. Denote the associated transformation on $\mathbb{N}_0^{\mathbb{Z}^d}$ by $T_h(\x) := \x \circ \theta_h, \x := (x_i)_{i \in \mathbb{Z}^d} \in \mathbb{N}_0^{\mathbb{Z}^d}$, where $(\x \circ \theta_h)_i := x_{\theta_h(i)}, i \in \mathbb{Z}^d$. By the translation invariance of the dynamics, $\boldsymbol{\mu} \circ T_h^{-1} = \boldsymbol{\mu}$ for any $h \in \{1,\cdots d\}$.  For any $h \in  \{1,\cdots, d\}$, the quadruple $\mathcal{Q}_h :=\left(\mathbb{N}_0^{\mathbb{Z}^d}, \mathcal{B}\left(\mathbb{N}_0^{\mathbb{Z}^d}\right), \boldsymbol{\mu}, T_h\right)$ is referred to as a probability preserving transformation (ppt). Recall that $\mathcal{Q}_h$ is called \emph{strongly mixing} if for any $A, B \in \mathcal{B}\left(\mathbb{N}_0^{\mathbb{Z}^d}\right)$,
\begin{equation}\label{sm}
\lim_{n \rightarrow \infty} \boldsymbol{\mu}\left(A \cap T_h^{-n}B\right) = \boldsymbol{\mu}(A)\boldsymbol{\mu}(B),
\end{equation}
where, for $n \in \mathbb{N}$, $T_h^{-n}(\cdot)$ is the map on $\mathbb{N}_0^{\mathbb{Z}^d}$ obtained by composing $T_h^{-1}(\cdot)$ $n$ times. A set $A \in \mathcal{B}\left(\mathbb{N}_0^{\mathbb{Z}^d}\right)$ is called \emph{invariant} under the family of transformations $\{T_h\}_{h=1}^d$ if $T_h^{-1}A = A$ for all $h \in \{1,\cdots, d\}$. The family $\{T_h\}_{h=1}^d$ is called \emph{ergodic} if all invariant sets are trivial, that is, for any $A$ invariant, $\boldsymbol{\mu}(A) = 0 \text{ or } 1$. One can show that (see for eg. \cite{baccelli_bremaud}), if $\mathcal{Q}_h$ is strongly mixing for each $h \in \{1,\cdots, d\}$, then the family $\{T_h\}_{h=1}^d$ is ergodic.
\\

For any $K \in \mathbb{N}_0$, define $\X_{0,K} := \{X_i : i \in \mathbb{Z}^d, \|i\|_{\infty} \le K\}$, thought of as a random variable in $\mathbb{N}_0^{(2K+1)^d}$. Similarly, for $n \in \mathbb{N}$, $K \in \mathbb{N}_0$ and $h \in \{1,\cdots,d\}$, define $\X_{n\mathbf{e}_h,K} := \{X_i : i \in \mathbb{Z}^d, \|i - n\mathbf{e}_h\|_{\infty} \le K\}$.


\begin{theorem}
	Fix any $K \in \mathbb{N}_0$, $h \in \{1,\cdots,d\}$ and $0\leq \lambda < \frac{1}{\sum_{j \in \mathbb{Z}^d}a_j}$. Let $f,g$ be functions from $\mathbb{N}_0^{(2K+1)^d}$ to $\mathbb{R}$ such that $\mathbb{E}[f^2(\X_{0,K})] < \infty$ and $\mathbb{E}[g^2(\X_{0,K})]<\infty$.
The following limit exists:
	\begin{equation}\label{strongmix}
	\lim_{n \rightarrow \infty} \mathbb{E}[f(\X_{0,K}) g(\X_{n\mathbf{e}_h,K})] = \mathbb{E}[f(\X_{0,K})]\mathbb{E}[g(\X_{0,K})].
	\end{equation}
	In particular, $\mathcal{Q}_h$ is strongly mixing for any $h \in \{1,\cdots, d\}$. Hence, the family $\{T_h\}_{h=1}^d$ is ergodic.  
	\label{thm:corr_decay_general}
\end{theorem}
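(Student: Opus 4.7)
The plan is to prove (\ref{strongmix}) by combining the CFTP construction with a spatial truncation of the dynamics so that, after truncation, the two variables $\X_{0,K}$ and $\X_{n\mathbf{e}_h,K}$ become functions of disjoint pieces of the driving Poisson processes. A standard truncation $f \leftarrow (f \wedge M) \vee (-M)$, combined with Cauchy--Schwarz and the $L^2$ hypotheses, reduces the claim to bounded Borel $f, g$. For such $f, g$, taking $f = \mathbf{1}_A$, $g = \mathbf{1}_B$ for cylinder sets $A, B$ yields (\ref{sm}) on a $\pi$-system generating $\mathcal{B}(\mathbb{N}_0^{\mathbb{Z}^d})$ and hence on all Borel sets, giving strong mixing of $\mathcal{Q}_h$ for each $h$; ergodicity of $\{T_h\}_{h=1}^d$ then follows.

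Fix bounded $f, g$ and approximate $\X_{0,K}$ in two stages. First, CFTP gives $\X_{0,K;t} := \{X_{i;t}(0) : \|i\|_\infty \le K\} \nearrow \X_{0,K}$ a.s.\ as $t \to \infty$; by translation invariance of the driving PPPs, the analogous approximation holds for $\X_{n\mathbf{e}_h,K;t}$ with error uniform in $n$, so bounded convergence at fixed (large) $t$ replaces the stationary expectations by their finite-horizon versions. Second, for fixed $t$ and $L > K$, define $\hat\X^L_{0,K;t}$ by running the dynamics from the empty configuration at time $-t$ on the finite box $B(0,L) := \{j \in \mathbb{Z}^d : \|j\|_\infty \le L\}$, treating queues outside $B(0,L)$ as identically zero (so that the departure rate at $i \in B(0,L)$ becomes $\hat X^L_i(s) / \sum_{j \in B(0,L)} a_{i-j}\hat X^L_j(s)$). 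By construction, $\hat\X^L_{0,K;t}$ is a deterministic function of $(\mathcal{A}_i, \mathcal{D}_i)_{i \in B(0,L)}$ restricted to $[-t,0]$, and a coupling argument analogous to the monotonicity proof in \cite{interf_q} shows $\hat\X^L_{0,K;t}$ is non-decreasing in $L$ and coordinate-wise dominated by $\X_{0,K;t}$. The central claim, to which I return below, is that $\hat\X^L_{0,K;t} \nearrow \X_{0,K;t}$ a.s.\ as $L \to \infty$.

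Granting this claim, define $\hat\X^L_{n\mathbf{e}_h,K;t}$ analogously via the dynamics on $B(n\mathbf{e}_h, L)$. Once $n > 2L$, the boxes $B(0,L)$ and $B(n\mathbf{e}_h, L)$ are disjoint, so $\hat\X^L_{0,K;t}$ and $\hat\X^L_{n\mathbf{e}_h,K;t}$ are independent (functions of disjoint subfamilies of the driving PPPs) and identically distributed, yielding
\begin{equation*}
\mathbb{E}\bigl[f(\hat\X^L_{0,K;t})\, g(\hat\X^L_{n\mathbf{e}_h,K;t})\bigr] \;=\; \mathbb{E}[f(\hat\X^L_{0,K;t})]\; \mathbb{E}[g(\hat\X^L_{0,K;t})].
\end{equation*}
Sending $n \to \infty$ at fixed $(t,L)$, then $L \to \infty$ at fixed $t$, then $t \to \infty$, and using bounded convergence at each stage together with the two monotone limits, yields (\ref{strongmix}) for bounded $f, g$.

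The main obstacle is the central claim $\hat\X^L_{\cdot;t} \nearrow \X_{\cdot;t}$ as $L \to \infty$. Setting $\tilde X_i(s) := \lim_L \hat X^L_{i;t}(s)$, the limit $\tilde X$ dominates every $\hat X^L$ and is dominated by $X_{\cdot;t}$, while coinciding with the latter at the initial time $-t$. Almost-sure finiteness of the infinite sums $\sum_j a_j \tilde X_{i-j}(s)$ (from $\sum_j a_j < \infty$ together with $\mathbb{E}[\tilde X_j(s)] \le \mathbb{E}[X_{j;t}(s)] < \infty$, the latter being Proposition~4.3 of \cite{interf_q}) permits monotone convergence of the denominator $\sum_{j \in B(0,L)} a_{i-j}\hat X^L_j(s) \to \sum_j a_j \tilde X_{i-j}(s)$, so $\tilde X$ inherits the full dynamics on $\mathbb{Z}^d$; pathwise uniqueness of the latter from empty initial data (Proposition~4.1 of \cite{interf_q}) then forces $\tilde X = X_{\cdot;t}$. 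Making this continuity-of-the-ratio-rates argument rigorous along paths, including the handling of the countable set of jump epochs, is the main technical burden of the proof.
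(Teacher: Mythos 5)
Your strategy is sound and lands on the same underlying mechanism as the paper's proof (gain independence by a spatial truncation of the dynamics, then remove the truncation by monotonicity/CFTP plus bounded convergence, then upgrade from bounded to $L^2$ functions and from cylinder sets to all Borel sets), but the truncation you choose is genuinely different, and the difference matters for where the work sits. The paper keeps everything stationary and truncates the \emph{interference sequence} to range $L_n \ll n$ while freezing a slab of sites midway between $0$ and $n\mathbf{e}_1$ (no arrivals there); independence of the two windows is then immediate, and the crucial convergence of the truncated stationary profiles back to $\X_{0,K}$ is obtained by sandwiching between box-restricted and interference-truncated systems so that it can be quoted from \cite{interf_q} (Proposition 7.3 for the box limit at fixed truncation, the proof of Proposition 4.3 for the limit in the truncation). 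You instead keep the full interference sequence and truncate in time (finite-horizon CFTP from empty) and in space (dynamics confined to disjoint boxes $B(0,L)$ and $B(n\mathbf{e}_h,L)$); this buys you an exactly i.i.d. pair without even using the symmetry $a_i=a_{-i}$ (which the paper's slab construction does invoke), and the uniformity in $n$ of both approximation errors, needed to justify your order of limits, does follow from translation invariance as you indicate for the $t$-stage (you should state it for the $L$-stage too). The price is your ``central claim'' that the box-restricted dynamics increase to the full infinite-volume dynamics as $L \to \infty$ with the \emph{untruncated} kernel: this is precisely the kind of statement the paper avoids having to prove by inserting the interference truncation first, and your sketch (monotone limit, a.s. finiteness of $\sum_j a_j \tilde X_{i-j}(s)$, continuity of the ratio rates at the departure atoms since no atom's $u$-coordinate equals the limiting ratio a.s., then pathwise uniqueness from Proposition 4.1 of \cite{interf_q}) is the right outline but is left unexecuted, and the atom/uniqueness bookkeeping with infinite-range interaction is not trivial. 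The cleanest way to close your proof without new analysis is to add one more monotone layer: truncate $a$ to finite range $M$ inside your boxes, cite Proposition 7.3 of \cite{interf_q} for the $L\to\infty$ limit at fixed $M$ and the proof of Proposition 4.3 of \cite{interf_q} for $M \to \infty$, exactly as the paper does; with that substitution your argument is complete and is a legitimate variant of the published one.
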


An immediate corollary is the following explicit formula for the asymptotic covariances of the stationary queue length processes.
\begin{corollary}
	\begin{align*}
	\lim_{n\to\infty} \mathbb{E}[X_0 X_{n\mathbf{e}_h}] = (\mathbb{E}[X_0])^2
	= \left( \frac{\lambda}{1-\lambda\sum_{j \in \mathbb{Z}^d}a_j}\right)^2.
	\end{align*}
	\label{lem:corr_decay_exists}
\end{corollary}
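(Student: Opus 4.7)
The plan is to obtain this corollary as an essentially immediate consequence of Theorem \ref{thm:corr_decay_general} applied in the smallest nontrivial setting, combined with the tail bound of Theorem \ref{thm:exp_bound} and the known mean formula from \cite{interf_q}. Specifically, I would take $K = 0$ so that $\X_{0,K} = X_0$ and $\X_{n\mathbf{e}_h,K} = X_{n\mathbf{e}_h}$, and choose $f(x) = g(x) = x$, the identity function on $\mathbb{N}_0$.

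To apply Theorem \ref{thm:corr_decay_general}, the only thing that needs checking is the square-integrability hypothesis $\mathbb{E}[f^2(\X_{0,K})] = \mathbb{E}[X_0^2] < \infty$. This is where the earlier exponential moment result does the work: Theorem \ref{thm:exp_bound} gives that $\mathbb{E}[e^{c X_0}] < \infty$ for some $c > 0$, which implies $\mathbb{E}[X_0^p] < \infty$ for every $p \ge 1$, and in particular the required second moment bound. (Alternatively, the finite second moment established in \cite{stolyar_conjecture} would also suffice.) With this hypothesis verified, Theorem \ref{thm:corr_decay_general} directly yields
\begin{equation*}
\lim_{n \to \infty} \mathbb{E}[X_0 X_{n\mathbf{e}_h}] = \mathbb{E}[X_0] \, \mathbb{E}[X_0] = (\mathbb{E}[X_0])^2.
\end{equation*}

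The second equality in the corollary is then just the plug-in of the explicit first moment. Proposition 4.3 of \cite{interf_q}, recalled in the paragraph preceding Theorem \ref{thm:exp_bound}, states that in the stationary regime $\mathbb{E}[X_i(t)] = \lambda/(1 - \lambda \sum_{j \in \mathbb{Z}^d} a_j)$ for every $i$ and $t$. Squaring gives the advertised right-hand side.

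Since the corollary reduces to a direct instantiation of Theorem \ref{thm:corr_decay_general}, there is no substantive obstacle beyond verifying the moment hypothesis; the only point worth flagging is that square-integrability of marginals is indispensable for quoting the theorem, and the exponential tail bound \eqref{probup} is the cleanest way to secure it.
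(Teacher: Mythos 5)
Your proposal is correct and follows essentially the same route as the paper: apply Theorem \ref{thm:corr_decay_general} with $K=0$ and $f(\ell)=g(\ell)=\ell$, then plug in the mean formula from Proposition 4.3 of \cite{interf_q}. Your explicit verification of the hypothesis $\mathbb{E}[X_0^2]<\infty$ via the exponential moment bound (or \cite{stolyar_conjecture}) is a point the paper leaves implicit, but it is the same argument.
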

\begin{proof}
	Applying Theorem \ref{thm:corr_decay_general} with $K=0$ and $f(\ell)=g(\ell) = \ell, \ell \in \mathbb{N}$, and using Proposition 4.3 of \cite{interf_q}, yields this result.
\end{proof}
The ergodicity established in Theorem \ref{thm:corr_decay_general} directly implies the following version of the ergodic theorem. A sequence of finite subsets $\{F_r : r \in \mathbb{N}\}$ of $\mathbb{Z}^d$ with $\cup_{r \in \mathbb{N}} F_r = \mathbb{Z}^d$ is said to be a \emph{F{\o}lner sequence} if 
$$
\lim_{r \rightarrow \infty} \frac{|(\theta_h F_r) \Delta F_r|}{|F_r|} = 0,
$$
where $\theta_hF_r := \{f + \mathbf{e}_h : f \in F_r\}$ and $| \cdot |$ denotes set cardinality. The F{\o}lner sequence $\{F_r : r \in \mathbb{N}\}$ of $\mathbb{Z}^d$ is called \emph{tempered} if there exists $C>0$ such that for all $r \in \mathbb{N}$,
$$
\left|\bigcup_{u<r}F_u^{-1}F_r\right| \le C|F_r|.
$$
where $F_u^{-1}F_r = \{i - j : i \in F_r, j \in F_u\}$. For example, consider the sequence of boxes defined by
$$
B_r = \{ k = (k_1,\cdots, k_d) \in \mathbb{Z}^d : i^{(r)}_l \le k_l < j^{(r)}_l \text{ for all } 1 \le l \le d\}, \ i^{(r)},j^{(r)} \in \mathbb{Z}^d.
$$
If $B_r \subset B_{r+1}$ for all $r \in \mathbb{N}$ and $\cup_{r \in \mathbb{N}} B_r = \mathbb{Z}^d$, then $\{B_r : r \in \mathbb{N}\}$ forms a tempered F{\o}lner sequence with $C = 2^d$. The next corollary follows from ergodic theorems for amenable groups \cite[Theorems 1.1,1.3]{lindenstrauss1999pointwise} upon noting that $\mathbb{Z}^d$ is an amenable (additive) group. For $i \in \mathbb{Z}^d$ and $\x \in \mathbb{N}_0^{\mathbb{Z}^d}$, define $\x_{(i)} \in \mathbb{N}_0^{\mathbb{Z}^d}$ by $(\x_{(i)})_l := \x_{l+i}, l \in \mathbb{Z}^d$.

\begin{corollary}\label{meanET}
For any $f \in L^1(\mu)$ and any F{\o}lner sequence $\{F_r : r \in \mathbb{N}\}$, 
$$
\lim_{r \rightarrow \infty}\frac{1}{|F_r|}\sum_{i \in F_r}f\left(\X_{(i)}\right) = \int f(\x) \mu(d\x)
$$
in $L^1$ with respect to $\mu$. In addition, the above convergence holds almost surely with respect to $\mu$ if $\{F_r : r \in \mathbb{N}\}$ is a tempered F{\o}lner sequence (eg. $\{B_r : r \in \mathbb{N}\}$ defined above).
\end{corollary}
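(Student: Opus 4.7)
The plan is to deduce this corollary directly from the ergodic theorems for amenable group actions established by Lindenstrauss \cite{lindenstrauss1999pointwise}, using Theorem \ref{thm:corr_decay_general} as the key input. First I would lift the commuting generators $\{T_h\}_{h=1}^d$ to a full $\mathbb{Z}^d$-action on $\left(\mathbb{N}_0^{\mathbb{Z}^d}, \mathcal{B}\left(\mathbb{N}_0^{\mathbb{Z}^d}\right), \boldsymbol{\mu}\right)$: for $i = (i_1,\ldots,i_d) \in \mathbb{Z}^d$, set $T_i := T_1^{i_1} \circ \cdots \circ T_d^{i_d}$. Since the coordinate translations $\theta_h$ commute on $\mathbb{Z}^d$, so do the $T_h$, and $T_i \x = \x_{(i)}$ in the notation introduced just before the statement. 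Translation-invariance of the dynamics gives $\boldsymbol{\mu} \circ T_i^{-1} = \boldsymbol{\mu}$ for every $i \in \mathbb{Z}^d$, so this is a measure-preserving action of the countable abelian (hence amenable) group $\mathbb{Z}^d$.

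Next I would verify that this $\mathbb{Z}^d$-action is ergodic. A set $A \in \mathcal{B}(\mathbb{N}_0^{\mathbb{Z}^d})$ satisfies $T_i^{-1}A = A$ for every $i \in \mathbb{Z}^d$ if and only if $T_h^{-1}A = A$ for every $h \in \{1,\ldots,d\}$: one direction is immediate, and the other follows because invariance under each generator is preserved under composition and inversion. Thus the family $\{T_h\}_{h=1}^d$-invariant sets coincide with the $\mathbb{Z}^d$-action-invariant sets, and the triviality of the latter is exactly the ergodicity conclusion of Theorem \ref{thm:corr_decay_general}.

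Finally, I would invoke the two ergodic theorems in \cite{lindenstrauss1999pointwise}. For any $f \in L^1(\mu)$ and any F{\o}lner sequence $\{F_r\}$, the mean ergodic theorem for amenable groups (Theorem 1.1 there) gives
\[
\frac{1}{|F_r|}\sum_{i \in F_r} f \circ T_i \xrightarrow{L^1(\mu)} \int f \, d\mu,
\]
which, rewritten via $f \circ T_i(\x) = f(\x_{(i)})$, is the first claim. When $\{F_r\}$ is tempered, the pointwise ergodic theorem for amenable groups (Theorem 1.3 there) upgrades this to almost sure convergence, yielding the second claim; the $2^d$-temperedness of nested boxes $\{B_r\}$ follows from a direct counting argument on the coordinate-wise increments. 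There is essentially no substantive obstacle once Theorem \ref{thm:corr_decay_general} is in hand—the only care required is the bookkeeping that identifies $\{T_h\}$-ergodicity with ergodicity of the full $\mathbb{Z}^d$-action so that the cited amenable-group theorems become applicable.
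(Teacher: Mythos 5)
Your proposal is correct and follows essentially the same route as the paper, which likewise deduces the corollary from Theorems 1.1 and 1.3 of \cite{lindenstrauss1999pointwise} for the amenable group $\mathbb{Z}^d$, using the ergodicity from Theorem \ref{thm:corr_decay_general}. Your extra bookkeeping---lifting $\{T_h\}_{h=1}^d$ to the full $\mathbb{Z}^d$-action and identifying its invariant sets with the $\{T_h\}$-invariant ones---is exactly the (implicit) verification the paper leaves to the reader.
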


Theorem \ref{thm:corr_decay_general} is proved in Section \ref{sec:proof_corr_decay}.

\section{Proof of Theorem \ref{thm:exp_bound} and Corollary \ref{max}}
\label{sec:proof_exponential_moment}

\subsection{Proof of the Lower Bound in \eqref{probup}}\label{lowproof}


The proof of the lower bound in \eqref{probup} follows from a coupling argument, such that all queue lengths in the process are bounded from below by a collection of independent $M/M/1$ queues at each of the sites. 
We shall construct a coupling using the same driving sequence $(\mathcal{A}_i,\mathcal{D}_i)_{i \in \mathbb{Z}^d}$. For each $t \ge 0$, define the collection of independent $M/M/1$ queues $\{Q_{i;t}(s) : s \ge -t, i \in \mathbb{Z}^d\}$ such that for each $i \in \mathbb{Z}^d$, arrival happens at epochs of $\mathcal{A}_i$ and departure happens at epochs of $\mathcal{D}_i$ (that is, for any atom $(s', u')$ of $\mathcal{D}_i$, $s' \ge -t$, a departure happens from the queue $Q_{i;t}$ at time $s'$). 
The process $\{Q_{i;t}(s) : s \ge -t, i \in \mathbb{Z}^d\}$ is started empty at time $-t$, namely, almost-surely, $Q_{i;t}(-t) = 0$, for all $i \in \mathbb{Z}^d$.
Now, construct the process $\{X_{i;t}(s) : s \ge -t, i \in \mathbb{Z}^d\}$ using the same driving processes $(\mathcal{A}_i,\mathcal{D}_i)_{i \in \mathbb{Z}^d}$ using the CFTP recipe in Section \ref{sec:model}. Then, upon noting that for each $s \ge -t$, $\frac{X_i(s)}{\sum_{j \in \mathbb{Z}^d}a_j X_{i-j}(s)} \le 1$, it follows that under this coupling, almost surely, $Q_{i;t}(s) \le X_{i;t}(s)$ for all $s \ge -t$. For each $s \in \mathbb{R}$ and $i \in \mathbb{Z}^d$, by monotonicity of $Q_{i;t}(s)$ in $t$, which follows similarly as for $X_{i;t}(s)$, $Q_{i;t}(s)$ converges almost surely to a random variable $Q_{i, \infty}(s)$ as $t \rightarrow \infty$, where $Q_{i, \infty}(s)$ is distributed according to the stationary distribution of an $M/M/1$ queue with arrival rate $\lambda$ and departure rate $1$. Hence, by the coupling, for each $s \in \mathbb{R}$ and $i \in \mathbb{Z}^d$, $X_{i, \infty}(s)$ stochastically dominates $Q_{i, \infty}(s)$. The result now follows upon noting that the stationary queue length for a $M/M/1$ queue (with arrival rate less than departure rate) has exponential tails \cite{baccelli_bremaud}.

\subsection{Proof of \eqref{expmom} and the Upper Bound in \eqref{probup}}


We shall prove the exponential moment bound \eqref{expmom}, which automatically implies the upper bound in \eqref{probup}.
\\

 In order to proceed with the proof, we will need to set some notations. For any $L \in \mathbb{N}$, denote by $\{X_i^{L}(t) : t \in \mathbb{R}, i \in \mathbb{Z}^d\}$ the stationary process (distributed according to the minimal stationary regime) of the dynamics described in Section \ref{sec:model} constructed using a \emph{truncated} interference sequence $(a_i^{L} := a_i \mathbf{1}_{\| i \|_{\infty} \le L})_{i \in \mathbb{Z}^d}$.
 One can consider a natural coupling of all the processes $\{ (X_i^L(t),X_i(t)), t \in \mathbb{R}, i\in\mathbb{Z}^d, L \in \mathbb{N} \}$, where the lack of superscript denotes the dynamics without any truncation of the interference sequence, such that, {\em (i)} for each $L \in \mathbb{N}$, the process  $\{X_i^{L}(t) : t \in \mathbb{R}, i \in \mathbb{Z}^d\}$  is stationary, distributed according to its minimal stationary solution, {\em (ii)} the process $\{X_i(t) : t \in \mathbb{R}, i \in \mathbb{Z}^d\}$ is stationary distributed according to its minimal stationary solution, and {\em(iii)} almost-surely, for all $t \in \mathbb{R}$, $i \in \mathbb{Z}^d$ and $L_1 \leq L_2$, $X^{L_1}_i(t) \leq X_i^{L_2}(t) \leq X_i(t)$. This coupling can be constructed by using the same driving sequence $(\mathcal{A}_i,\mathcal{D}_i)_{i \in \mathbb{Z}^d}$ to construct all the processes involved using the CFTP recipe given in Section \ref{sec:model}.
 Moreover, from the proof of Proposition 4.3 of \cite{interf_q}, almost surely, for all $t \in \mathbb{R}$, and $i \in \mathbb{Z}^d$, $X_i^{L}(t)  \nearrow X_i(t)$ as $L \nearrow \infty$.
 \\

We will also need other modifications to the dynamics to restrict it on the torus, in order to carry out the proof. For any $n \in \mathbb{N}$, let $\mathbb{B}_n := [-n,n]^d \subset \mathbb{Z}^d$ denote the rectangle of side length $2n$ centered around the origin and let $\mathbb{T}_n$ denote the $n$-torus in $\mathbb{Z}^d$ seen as $\mathbb{B}_n$ with opposite faces identified.
 For any $L \in \mathbb{N}$, denote by $\{Y_i^{(n),L}(\cdot)\}_{i \in \mathbb{Z}_d}$ the process described in Section \ref{sec:model} whose dynamics are \emph{restricted} to the torus $\mathbb{T}_n$ as follows. 
For each $i \in \mathbb{B}_n$, the arrival process into the queue $Y_i^{(n),L}$ follows the PPP $\mathcal{A}_i$ as before, but departure happens at rate $\frac{Y_i^{(n),L}(t)}{\sum_{j \in \mathbb{Z}^d} a_{j}^{L} Y_{(i-j)\mod \mathbb{B}_n}^{(n),L}(t)}$ at time $t$. Set $Y_i^{(n),L}(\cdot) \equiv 0$ if $i \notin \mathbb{B}_n$.
  We define another process $(Z_i^{(n),L}(\cdot))_{i \in \mathbb{B}_n}$ to be the process described in Section \ref{sec:model}, but with the dynamics restricted to the set $\mathbb{B}_n$  with the edge effects. More precisely, set $Z_i^{(n),L}(\cdot) \equiv 0$ if $i \notin \mathbb{B}_n$, and for each $i \in \mathbb{B}_n$, the arrival process into the queue $Z_i^{(n),L}$ follows the PPP $\mathcal{A}_i$ as before, but departure happens at rate $\frac{Z_i^{(n),L}(t)}{\sum_{j \in \mathbb{Z}^d} a_{j}^{L} Z_{i-j}^{(n),L}(t)}$ at time $t$.\\
  

 From the monotonicity in the dynamics, there exists a coupling of all the processes (described as before using common PPP $\mathcal{A}_i$ and $\mathcal{D}_i$ respectively for arrival and departure) such that, {\color{black} they are all individually stationary} and almost surely, for all $n,L$, all $i \in \mathbb{B}_n$ and all $t \in \mathbb{R}$, $X_i^{L}(t) \geq Z_i^{(n),L}(t) $ and $Y_i^{(n),L}(t) \geq Z_i^{(n),L}(t)$. 
  \\
  
  	The main results established in \cite{interf_q} \textcolor{black}{(see Theorem 1.1, Theorem 5.2, and Remark 5.5 there)} show that if $\lambda < \frac{1}{\sum_{j \in \mathbb{Z}^d}a_j}$, then for all $L$ and $n > L$, the two processes $\{Y_i^{(n),L}(\cdot)\}_{i \in \mathbb{B}_n}$ and $\{Z_i^{(n),L}(\cdot)\}_{i \in \mathbb{B}_n}$ admit a unique stationary solution  and the process $\{X_i^{L}(\cdot)\}_{i \in \mathbb{Z}^d}$ has a non-trivial minimal stationary solution.
  	From monotonicity, one can construct a coupling of the processes $\{(X_i^{L}(\cdot), Y_i^{(n),L}(\cdot), Z_i^{(n),L}(\cdot)) : i \in \mathbb{Z}^d, \text{ and } n, L \in \mathbb{N}\}$ such that, they are all individually stationary (with $\{X_i^{L}(t)\}_{i \in \mathbb{Z}^d}$ having the minimal stationary distribution for every $t,L$) and, almost surely,
  	\begin{itemize}
  		\item For each fixed $L$ and $n > L$, $Y_i^{(n),L}(t) \geq Z_i^{(n),L}(t)$, for all $i \in \mathbb{B}_n$ and all $t \in \mathbb{R}$,
  		\item $n \rightarrow Z_0^{(n),L}(t)$ is non-decreasing, for all $t \in \mathbb{R}$,
  		\item $\lim_{n \rightarrow \infty}Z_0^{(n),L}(t) = X^{L}_0(t)$, for all $t \in \mathbb{R}$,
  		\item $L \rightarrow X_i^{L}(t)$ is non-decreasing and $X^{L}_i(t) \nearrow X_i(t)$ as $L \rightarrow \infty$, for all $i \in \mathbb{Z}^d$ and all $t \in \mathbb{R}$.
  	\end{itemize}
\textcolor{black}{The third property above follows from Proposition 7.3 of \cite{interf_q}. The fourth property follows from monotonicity and the proof of Proposition 4.3 of \cite{interf_q}.}
  In the rest of the proof, we shall assume that $\lambda < \frac{1}{\sum_{j \in \mathbb{Z}^d}a_j}$ and the processes $\{X_i^{L}(\cdot)\}_{i \in \mathbb{Z}^d}$, $\{Y_i^{(n),L}(\cdot)\}_{i \in \mathbb{B}_n}$ and $\{Z_i^{(n),L}(\cdot)\}_{i \in \mathbb{B}_n}$  are all individually stationary and satisfy the above properties. The following is the key technical result needed for the proof of \eqref{expmom}.

\begin{proposition}
	Let $\lambda < \frac{1}{\sum_{j \in \mathbb{Z}^d}a_j}$ and $c_0 > 0$ such that $c_0e^{c_0}  = \frac{\frac{1}{\sum_{j \in \mathbb{Z}^d}a_j}-\lambda}{\lambda+1} =: D$. Then for all $c \in [0,c_0)$, $L \in \mathbb{N}$ and $n > L$, $ \mathbb{E}[e^{c Y_0^{(n),L}(0)}]  \leq \frac{D}{D - ce^c} < \infty$. 
	\label{prop:torus_moment}
\end{proposition}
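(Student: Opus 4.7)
The plan is to exploit stationarity of the torus-restricted Markov chain $\{Y^{(n),L}(t)\}$ via Dynkin's formula applied to exponential test functions. Let $\phi(c) := \mathbb{E}[e^{cY_0^{(n),L}(0)}]$. The goal is to show $\phi(c) \le D/(D - ce^c)$ uniformly in $n, L$. A priori finiteness of $\phi(c)$ for small $c$ is \emph{not} assumed; rigorously one would justify the generator calculations below by first running them with the truncated test function $f_M(y) = e^{cy_0} \wedge e^{cM}$ (for which stationary expectations trivially exist), deriving an inequality of the form $\mathbb{E}[f_M(Y)] \le D/(D - ce^c)$ uniformly in $M$, and then sending $M \to \infty$ by monotone convergence.

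My first concrete step is to apply the infinitesimal generator $\mathcal{L}$ to $f(y) = e^{cy_0}$. Since $f$ depends only on $y_0$, only arrivals at site $0$ (rate $\lambda$) and potential departures from site $0$ (rate $y_0/S_0$, with $S_0 := \sum_j a_j^L y_{-j \bmod \mathbb{B}_n}$) give a nonzero contribution, yielding
\[
\mathcal{L}f(y) = e^{cy_0}\bigl[\lambda(e^c - 1) + (y_0/S_0)(e^{-c} - 1)\bigr].
\]
Stationarity $\mathbb{E}[\mathcal{L}f(Y)] = 0$ then gives the key identity
\[
\mathbb{E}\bigl[(Y_0/S_0)\,e^{cY_0}\bigr] = \lambda e^c\, \phi(c). \qquad (\star)
\]
An analogous computation with $f(y) = y_0 e^{cy_0}$, using $(\star)$ to eliminate the cross-term, should produce a second identity of the form $\mathbb{E}[(Y_0^2/S_0)\,e^{cY_0}] = \lambda e^c[\phi(c) + \phi'(c)]$, where $\phi'(c) := \mathbb{E}[Y_0 e^{cY_0}]$.

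From here, the plan is to derive an inequality $\phi(c) - 1 \le (ce^c/D)\,\phi(c)$, which rearranges to the claim. Two ingredients I would combine. First, the elementary bound $e^{cy} - 1 \le c y e^{cy}$ for $y \ge 0$ (straightforward from differentiation), which upon taking expectations gives $\phi(c) - 1 \le c\phi'(c)$. Second, a Cauchy--Schwarz-type bound summed over the torus to exploit translation invariance: for instance, the pointwise Cauchy--Schwarz $\bigl(\sum_i Y_i\bigr)^2 \le \bigl(\sum_i Y_i^2/S_i\bigr)\bigl(\sum_i S_i\bigr)$ combined with the identity $\sum_i S_i = \bigl(\sum_j a_j^L\bigr)\sum_i Y_i$ (valid on the torus) yields $\sum_i Y_i \le \bigl(\sum_j a_j\bigr)\sum_i Y_i^2/S_i$. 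A weighted analogue with $e^{cY_i}$-weights, together with the second generator identity, should control $\phi'(c)$ by $e^c \phi(c)/D$, completing the argument.

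The main obstacle is obtaining precisely the constant $D = (1/\sum_j a_j - \lambda)/(\lambda + 1)$ at the end rather than some weaker constant: the natural Cauchy--Schwarz bounds couple $\phi'(c)$ to terms involving $\phi(c)$, $\mathbb{E}[S_0 e^{cY_0}]$, and cross-site quantities $\mathbb{E}[Y_j e^{cY_0}]$, and closing the resulting system of inequalities in terms of $\phi(c)$ alone requires careful bookkeeping. The clean appearance of $\sum_j a_j$ (rather than $\sum_j a_j^L$) in $D$ is what makes the bound uniform in $L$, via the elementary inequality $\sum_j a_j^L \le \sum_j a_j$; uniformity in $n$ is automatic because no estimate uses finite-volume effects beyond translation invariance on the torus. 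That the bound is finite precisely when $ce^c < D$, i.e.\ $c < c_0$, then reflects the genuine stability margin $1/\sum_j a_j - \lambda > 0$.
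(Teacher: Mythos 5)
Your two generator identities are correct (granting stationarity and a proper justification for unbounded test functions), and your final assembly is sound in outline: combining $\phi(c)-1\le c\,\phi'(c)$ with a bound of the form $\phi'(c)\le (e^c/D)\,\phi(c)$ would indeed give $\phi(c)\le D/(D-ce^c)$. The genuine gap is precisely the step you flag as the ``main obstacle'' and then leave unresolved: converting the identity $\mathbb{E}[(Y_0^2/S_0)e^{cY_0}]=\lambda e^c[\phi(c)+\phi'(c)]$ into control of $\phi'(c)$ with the \emph{sharp} constant $\sum_j a_j$. After your Cauchy--Schwarz step you still need a pointwise (or expectation-level) bound of the form $\sum_i S_i e^{cY_i}\le \bigl(\sum_j a_j\bigr)\sum_i Y_i e^{cY_i}$, i.e.\ control of the cross terms $Y_{(i-j)\bmod \mathbb{B}_n}\,e^{cY_i}$. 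For pure powers this is exactly what Lemma \ref{lem:internal2} does via Young's inequality, $y_{i-k}\,y_i^{j-1}\le \frac{j-1}{j}y_i^j+\frac{1}{j}y_{i-k}^j$, followed by translation invariance of the torus sum; that trick relies on homogeneity of the weight and has no exponential analogue. The natural candidate $x e^{cy}\le \tfrac12\bigl(x e^{cx}+y e^{cy}\bigr)$ is simply false (take $x=y-\epsilon$ with $cy>1$; e.g.\ $c=1$, $y=2$, $x=1.9$), while the valid bound $x e^{cy}\le x e^{cx}+y e^{cy}$ costs a factor $2$: pushing it through your identities yields
\begin{equation*}
\phi'(c)\Bigl(\tfrac{1}{2\sum_j a_j}-\lambda e^c\Bigr)\le \lambda e^c\,\phi(c),
\end{equation*}
which is vacuous unless $\lambda e^c<\frac{1}{2\sum_j a_j}$ and so covers neither the full range $\lambda<1/\sum_j a_j$, $c<c_0$, nor produces the constant $D$. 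Closing the system without loss would essentially require a correlation inequality such as $\mathbb{E}[Y_{-j}e^{cY_0}]\le \mathbb{E}[Y_0 e^{cY_0}]$ for all $j$, which you neither prove nor indicate how to prove.

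The paper sidesteps this difficulty by never using exponential test functions: it applies rate conservation to $V(\mathbf{y})=\sum_{i\in\mathbb{B}_n}y_i^{k+1}$, where the cross terms can be absorbed exactly (Lemma \ref{lem:internal2}), obtaining the moment recursion $D(k+1)\mu_k^{(n)}\le \sum_{j=0}^{k-1}\binom{k+1}{j}\mu_j^{(n)}$ of Lemma \ref{lem:internal}; finiteness of every $\mu_k^{(n)}$ is known a priori from \cite{interf_q}, so no truncation of the test function is needed. The exponential moment is then assembled by multiplying by $c^k/k!$, summing over $k$, and invoking monotone convergence on the partial sums. Your truncation device $f_M=e^{cy_0}\wedge e^{cM}$ for justifying $\mathbb{E}[\mathcal{L}f]=0$ is also only sketched (applying $\mathcal{L}$ to $f_M$ creates boundary terms at the truncation level whose sign/size must be handled before any uniform-in-$M$ bound can be claimed), but that is secondary; the missing cross-term control with constant $\sum_j a_j$ is the essential gap, and either the paper's moment-by-moment route or some new idea replacing it is needed to complete your argument.
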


Before giving a proof of the above proposition, we shall see how this concludes the proof of the exponential moment bound \eqref{expmom}, and thus the upper bound in \eqref{probup}.

\begin{proof}[Proof of \eqref{expmom}]
	By the first property above, almost surely, for all $i \in \mathbb{B}_n, L \in \mathbb{N}$ and $t \in \mathbb{R}$,  we have, $Y_i^{(n),L}(t) \geq Z_i^{(n),L}(t)$. Thus, Proposition \ref{prop:torus_moment} implies that, for all $0 \leq c < c_0$,  $$\sup_{n > L} \mathbb{E}[e^{cZ_0^{(n),L}(t)}] \leq   \sup_{n > L} \mathbb{E}[e^{cY_0^{(n),L}(t)}] \le \frac{D}{D - ce^c}.$$
	As, almost surely, for any $t \in \mathbb{R}$, $L \in \mathbb{N}$, $n \rightarrow Z_0^{(n),L}(t)$ is non-decreasing and $\lim_{n \rightarrow \infty}Z_0^{(n),L}(t) = X^{L}_0(t)$, monotone convergence theorem establishes that, for all $c \in [0,c_0)$, $$\mathbb{E}[e^{cX_0^{L}(t)}] = \lim_{n \rightarrow \infty}\mathbb{E}[e^{cZ_0^{(n),L}(t)}] \leq \sup_{n > L} \mathbb{E}[e^{cZ_0^{(n),L}(t)}] \leq  \sup_{n > L}\mathbb{E}[e^{cY_0^{(n),L}(t)}] \leq \frac{D}{D-ce^c}.$$  
	Since, for each $L \in \mathbb{N}$, the process $\{X_i^{L}(\cdot) : i \in \mathbb{Z}^d\}$ is stationary and, almost surely, for any $t \in \mathbb{R}$, $i \in \mathbb{Z}^d$, 
	$X^{L}_i(t) \nearrow X_i(t)$ as $L \rightarrow \infty$, and $\sup_{L \in \mathbb{N}} \mathbb{E}[e^{cX_0^{L}(t)}] \leq \frac{D}{D-ce^c} < \infty$, yet another application of the monotone convergence theorem yields that $\mathbb{E}[e^{cX_0(t)}] = \lim_{L \rightarrow \infty}\mathbb{E}[e^{cX_0^{L}(t)}] \leq \frac{D}{D-ce^c} < \infty$.


\end{proof}

We set some notation and state two technical lemmas before proving Proposition \ref{prop:torus_moment}. 
We will fix a $L \in \mathbb{N}$ and drop the superscript $L$ notation to lighten the notational burden.
For each $n \in \mathbb{N}$ and $k \geq 1$, denote by $\mu_k^{(n)} := \mathbb{E}[(Y_0^{(n)}(0))^k]$, recalling that $\{Y_i^{(n)}(0)\}_{i \in \mathbb{B}_n}$ is distributed according to the stationary distribution of the process $\{Y_i^{(n)}(\cdot)\}_{i \in \mathbb{B}_n}$. Observe that \textcolor{black}{Theorem 5.2} from \cite{interf_q} immediately yields that for all $ n\in \mathbb{N}$ and all $k \geq 1$, $\mu_k^{(n)} < \infty$ since $\lambda < \frac{1}{\sum_{j \in \mathbb{Z}^d}a_j}$. We state two useful lemmas.

\begin{lemma}
	Let $(y_i)_{i \in \mathbb{B}_n}$ be any non-negative sequence of real numbers. For any $i \in \mathbb{B}_n$, define $\displaystyle{R_i := \frac{y_i}{\sum_{k \in \mathbb{Z}^d} a_k y_{(i-k) \mod \mathbb{B}_n}}}$. Then,
	for all $j \geq 1$,  
	\begin{equation*}
	\sum_{i \in \mathbb{B}_n} R_i y_i^j \geq \frac{1}{\sum_{k \in \mathbb{Z}^d} a_k} \sum_{i \in \mathbb{B}_n} y_i^j.
	\end{equation*}
	\label{lem:internal2}
\end{lemma}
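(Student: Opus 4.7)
The plan is to deduce the inequality from Cauchy–Schwarz followed by a Hölder estimate, using as the key structural input the torus symmetry $\sum_{i \in \mathbb{B}_n} y_{(i-k) \bmod \mathbb{B}_n} = \sum_{i \in \mathbb{B}_n} y_i$, which holds for every $k \in \mathbb{Z}^d$ since the shift is a bijection on $\mathbb{B}_n$.

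First I would normalize by setting $A := \sum_k a_k$, $\tilde a_k := a_k / A$ (a probability distribution on $\mathbb{Z}^d$), and $m_i := \sum_k \tilde a_k y_{(i-k) \bmod \mathbb{B}_n}$, so that the claim reduces to proving $\sum_i y_i^{j+1}/m_i \geq \sum_i y_i^j$. Well-definedness is not an issue: since $a_0 = 1$ we have $m_i \geq y_i / A$, so $m_i = 0$ forces $y_i = 0$, and the convention $0/0 = 0$ takes care of these trivial terms. The main step is Cauchy–Schwarz, applied via the splitting $y_i^j = (y_i^{(j+1)/2}/\sqrt{m_i}) \cdot (y_i^{(j-1)/2}\sqrt{m_i})$, which yields
\[
\Bigl( \sum_i y_i^j \Bigr)^2 \leq \Bigl( \sum_i \frac{y_i^{j+1}}{m_i} \Bigr) \Bigl( \sum_i y_i^{j-1} m_i \Bigr).
\]
Hence it suffices to prove the auxiliary bound $\sum_i y_i^{j-1} m_i \leq \sum_i y_i^j$.

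For this auxiliary bound, I would swap the two sums defining $m_i$ and estimate the inner sum $\sum_i y_i^{j-1} y_{(i-k) \bmod \mathbb{B}_n}$ uniformly in $k$: when $j=1$ it equals $\sum_i y_i = \sum_i y_i^j$ by the shift identity; when $j \geq 2$, Hölder's inequality with conjugate exponents $(j/(j-1),\,j)$ combined once more with the shift identity bounds it by $\sum_i y_i^j$. Averaging against the probability weights $(\tilde a_k)$ then gives exactly $\sum_i y_i^{j-1} m_i \leq \sum_i y_i^j$.

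I expect the only mild subtlety is noticing that a single application of Cauchy–Schwarz leaves a residual factor which itself must be bounded by a second, Hölder-type inequality, rather than trying to derive the estimate in one shot. The torus invariance of sums is essential in both rounds, and without it the argument would not close; no deeper obstacle is anticipated.
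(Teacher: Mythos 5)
Your proof is correct and takes essentially the same route as the paper: the identical Cauchy--Schwarz splitting (yours is just the paper's after normalizing by $A=\sum_k a_k$), followed by bounding the residual sum $\sum_i y_i^{j-1} m_i$ using the torus shift identity. The only cosmetic difference is that you apply Hölder's inequality to the shifted sum where the paper uses pointwise Young's inequality before summing, which amounts to the same estimate.
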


\begin{lemma}
	For all $ n \in \mathbb{N}$ and $k \geq 1$, 
	\begin{equation}
	D(k+1) \mu_k^{(n)} \leq \sum_{j=0}^{k-1} {k+1 \choose j} \mu_j^{(n)},
	\label{eqn:moment_bounds}
	\end{equation}
	where $D$ is given in Proposition \ref{prop:torus_moment}.
	\label{lem:internal}
\end{lemma}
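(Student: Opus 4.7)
My approach is to exploit stationarity of $\{Y_i^{(n)}(\cdot)\}_{i\in\mathbb{B}_n}$ by applying the infinitesimal generator to the translation-invariant test function $f(\mathbf{y}) := \sum_{i \in \mathbb{B}_n} y_i^{k+1}$. Since $\lambda < (\sum_j a_j)^{-1}$ guarantees (via Theorem 5.2 of \cite{interf_q}) finiteness of all moments $\mu_j^{(n)}$, the identity $\mathbb{E}[\mathcal{L}f(\mathbf{Y}^{(n)}(0))] = 0$ will yield the desired recursion after extracting the leading moment $\mu_k^{(n)}$.

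Computationally, the generator acts as
$\mathcal{L}f(\mathbf{y}) = \lambda \sum_i [(y_i+1)^{k+1} - y_i^{k+1}] + \sum_i R_i [(y_i-1)^{k+1} - y_i^{k+1}]$
(with $R_i$ as in Lemma \ref{lem:internal2}, and the $0/0=0$ convention taking care of empty queues). Expanding both increments by the binomial theorem, the leading $j=k$ terms combine to $(k+1)(\lambda - R_i)\,y_i^k$, while the lower-order contributions give $\lambda \sum_{j=0}^{k-1}\binom{k+1}{j} y_i^j$ from arrivals and $\sum_{j=0}^{k-1}\binom{k+1}{j}(-1)^{k+1-j} R_i y_i^j$ from departures. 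Taking expectations and using translation invariance on the torus (so $\mathbb{E}[Y_i^j] = \mu_j^{(n)}$ for every $i \in \mathbb{B}_n$) produces an identity of the form
$(k+1)\,\mathbb{E}\bigl[\textstyle\sum_i R_i Y_i^k\bigr] = \lambda(k+1)|\mathbb{B}_n|\mu_k^{(n)} + \lambda\sum_{j=0}^{k-1}\binom{k+1}{j}|\mathbb{B}_n|\mu_j^{(n)} + \sum_{j=0}^{k-1}\binom{k+1}{j}(-1)^{k+1-j}\mathbb{E}\bigl[\textstyle\sum_i R_i Y_i^j\bigr].$

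The quantitative heart of the argument is then twofold. First, apply Lemma \ref{lem:internal2} pathwise with $j=k$ to lower bound $\mathbb{E}[\sum_i R_i Y_i^k] \geq |\mathbb{B}_n|\mu_k^{(n)}/\sum_j a_j$. Second, since $a_0 = 1$ forces $R_i \leq 1$ almost surely, the crude estimate $(-1)^{k+1-j}\mathbb{E}[\sum_i R_i Y_i^j] \leq \mathbb{E}[\sum_i Y_i^j] = |\mathbb{B}_n|\mu_j^{(n)}$ dominates each of the lower-order terms uniformly in the sign pattern. Substituting both bounds into the identity and rearranging yields $(k+1)|\mathbb{B}_n|\mu_k^{(n)}\bigl(\tfrac{1}{\sum_j a_j} - \lambda\bigr) \leq (\lambda+1)\sum_{j=0}^{k-1}\binom{k+1}{j}|\mathbb{B}_n|\mu_j^{(n)},$ and dividing by $(\lambda+1)|\mathbb{B}_n|$ gives exactly \eqref{eqn:moment_bounds}, with $D$ as defined in Proposition \ref{prop:torus_moment}.

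The main obstacle is correctly handling the lower-order terms, since the binomial expansion of $(y_i-1)^{k+1}$ produces alternating-sign coefficients. Naively one might worry about cancellations going the wrong way. The saving observation is that the target inequality is one-sided: we only need an \emph{upper} bound on the signed sum, so terms whose sign is negative are harmless (since $R_i Y_i^j \geq 0$), and positive-sign terms yield to the trivial bound $R_i \leq 1$. A minor technical point is the justification of $\mathbb{E}[\mathcal{L}f] = 0$ for this unbounded polynomial test function; this is standard given the finiteness of $\mathbb{B}_n$ and of all moments of $Y_0^{(n)}(0)$, so the generator identity is valid term by term.
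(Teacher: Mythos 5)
Your proposal is correct and follows essentially the same route as the paper: apply the generator/rate-conservation identity to $V(\mathbf{y})=\sum_{i\in\mathbb{B}_n}y_i^{k+1}$, isolate the $j=k$ terms, lower bound $\mathbb{E}[\sum_i R_i Y_i^k]$ via Lemma \ref{lem:internal2}, bound the lower-order signed terms using $0\le R_i\le 1$, and finish with translation invariance on the torus. The only cosmetic difference is that the paper bounds the lower-order terms via $\lambda + R_i(-1)^{k+1-j}\le \lambda+1$ rather than splitting by sign, which is the same estimate.
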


Before proving the above two lemmas, we use them to prove Proposition \ref{prop:torus_moment}.

\begin{proof}[Proof of Proposition \ref{prop:torus_moment}]
	Let $n \in \mathbb{N}$ be arbitrary and fixed. Let $c_0 > 0$ be such that $c_0 e^{c_0} = D$, where $D$ is defined in Proposition \ref{prop:torus_moment}, and fix any $0 \leq c < c_0$. Let $m \geq 1$ be arbitrary. For $k \ge 1$, by multiplying both sides of equation (\ref{eqn:moment_bounds}) by $\frac{c^k}{k !}$,
	\begin{align*}
	D (k+1) \mu_k^{(n)} \frac{c^k}{k!} \leq  \sum_{j=0}^{k-1} {k+1 \choose j} \mu_j^{(n)} \frac{c^k}{k!}.
	\end{align*}
	Simplifying, we obtain
	\begin{align*}
	D  \mu_k^{(n)} \frac{c^k}{k!} \leq  \sum_{j=0}^{k-1}  \frac{1}{j ! (k+1-j)!} c^k \mu_j^{(n)}.
	\end{align*}
	For $m \in \mathbb{N}$, summing both sides from $k=1$ through to $m$,
	\begin{align}
	D \sum_{k=1}^{m}   \mu_k^{(n)} \frac{c^k}{k!} &\leq \sum_{k=1}^{m}\sum_{j=0}^{k-1}  \frac{1}{j ! (k+1-j)!} c^k \mu_j^{(n)} \nonumber\\
	&\stackrel{(a)}{=} \sum_{j=0}^{m-1} \frac{\mu_j^{(n)}}{j!} \sum_{k = j+1}^{m} \frac{1}{(k+1-j)!} c^k \nonumber\\
	& \stackrel{(b)}{=} \sum_{j=0}^{m-1} \frac{\mu_j^{(n)}}{j!} \sum_{u=0}^{m-j-1} \frac{c^{u+j+1}}{(u+2)!} \nonumber\\
	& \leq \sum_{j=0}^{m-1} \frac{\mu_j^{(n)}}{j!} c^{j+1} \sum_{u=0}^{\infty} \frac{c^u}{(u+2)!} \nonumber\\
	& \leq c \sum_{j=0}^{m-1} \frac{c^j \mu_j^{(n)}}{j!} \sum_{u=0}^{\infty} \frac{c^u}{u!} \nonumber \\ 
	&\leq c \sum_{j=0}^{m} \frac{c^j \mu_j^{(n)}}{j!} e^c. \label{eqn:exp_mom}
	\end{align}
	Step $(a)$ follows from swapping the order of summations. Step $(b)$ follows from the substitution $u = k-j-1$. Define $ S_m^{(n)} := \sum_{j=0}^{m} \frac{c^j \mu_j^{(n)}}{j!} $. Observe that $\mu_k^{(n)} \geq 0$, for all $k \geq 0$ and $n \in \mathbb{N}$, and thus $S_m^{(n)}$ is non-decreasing in $m$ and the (possibly infinite) limit $\lim_{m \rightarrow \infty}S_m^{(n)}$ exists. The calculation in (\ref{eqn:exp_mom}) yields that $D(S_m^{(n)} - 1 )\leq ce^c S_m^{(n)}$, which on re-arranging yields that $S_m^{(n)} \leq \frac{D}{D - ce^c} < \infty$. Taking a limit in $m$, we see that $\lim_{m \rightarrow \infty} S_m^{(n)} \leq \frac{D}{D - ce^c} < \infty$.  Thus, from Taylor's expansion and monotone convergence theorem, we have that  $\mathbb{E}[e^{c Y_0^{(n)}}] = \lim_{m \rightarrow \infty} S_m^{(n)} \leq \frac{D}{D - ce^c} < \infty$. Since the bound does not depend on $n$, and $n \in \mathbb{N}$ and $c \in [0,c_0)$ are arbitrary, the proof is concluded.

\end{proof}

We now give the proof of Lemma \ref{lem:internal2}.

\begin{proof}[Proof of Lemma \ref{lem:internal2}]
	
	By a direct application of Cauchy-Schwartz inequality, we have 
	\begin{align*}
	\left(\sum_{i \in \mathbb{B}_n} y_i^j\right)^2 \leq \left(\sum_{i \in \mathbb{B}_n} R_i y_i^j \right) \left(\sum_{i \in \mathbb{B}_n} \frac{y_i^j}{R_i} \right),
	\end{align*}
	{\color{black} where recall that $0/0$ in the RHS is interpreted as $0$}. It thus suffices from the above bound to establish that $\sum_{i \in \mathbb{B}_n} \frac{y_i^j}{R_i}  \leq (\sum_{k \in \mathbb{Z}^d} a_k) \sum_{i \in \mathbb{B}_n}y_i^j$. We do this as follows.
 \begin{align*}
	\sum_{i \in \mathbb{B}_n} \frac{y_i^j}{R_i} & =   \sum_{i \in \mathbb{B}_n} y_i^{j-1}\left( \sum_{k \in \mathbb{Z}_d} a_{k} y_{(i-k) \mod \mathbb{B}_n} \right) \\
	& \stackrel{(a)}{\leq } \sum_{i \in \mathbb{B}_n} \sum_{k \in \mathbb{Z}^d } a_{k} \left( \frac{j-1}{j}y_i^j + \frac{1}{j}y_{(i-k) \mod \mathbb{B}_n}^j\right) \\
	&=  \frac{j-1}{j}\sum_{k \in \mathbb{Z}^d } a_{k} \sum_{i \in \mathbb{B}_n} y_i^j + \frac{1}{j}\sum_{k \in \mathbb{Z}^d} a_{k} \sum_{i \in \mathbb{B}_n} y_{(i-k) \mod \mathbb{B}_n}^j \\
	& \stackrel{(b)}{=}  \frac{j-1}{j} \sum_{k \in \mathbb{Z}^d } a_{k}\sum_{i \in \mathbb{B}_n} y_i^j + \frac{1}{j}\sum_{k \in \mathbb{Z}^d} a_{k} \sum_{i \in \mathbb{B}_n} y_{i }^j  \\
	&= \sum_{k \in \mathbb{Z}^d } a_{k}\sum_{i \in \mathbb{B}_n} y_i^j .
\end{align*}
	Step $(a)$ follows from Young's inequality that for any $a,b \geq 0$, we have $ab \leq \frac{a^p}{p} + \frac{b^q}{q}$, for any $p,q > 0$ such that $p^{-1} + q^{-1} = 1$. Thus, $a^{j-1}b \leq (j-1)\frac{a^j}{j} + \frac{b^j}{j}$, where we set $p = \frac{j}{j-1}$ and $q = j$. Inequality $(b)$ follows from the observation that, by translational symmetry of the torus, $ \sum_{i \in \mathbb{B}_n} y_{(i-k) \mod \mathbb{B}_n}^j  = \sum_{i \in \mathbb{B}_n} y_i^j$, for all $k \in \mathbb{Z}^d$. 
	
\end{proof}

We are now ready to prove Lemma \ref{lem:internal}.
\begin{proof}[Proof of Lemma \ref{lem:internal}]
	
\textcolor{black}{Let $\{Y^{(n)}_i\}_{i \in \mathbb{Z}^d}$ be a collection of random variables sampled from the (unique) stationary distribution of $\{Y_i^{(n)}(\cdot)\}_{i \in \mathbb{B}_n}$}. For brevity, we shall drop the $n$ superscript and write $Y_i := Y_i^{(n)}$ for all $i \in \mathbb{B}_n$, and $\mu_k^{(n)} = \mu_k$, for all $k \geq 0$, as $n$ is fixed throughout the proof. We apply the rate-conservation equation to the Lyapunov function $V(\mathbf{y}) = \sum_{i \in \mathbb{B}_n} (y_i)^{k+1}$, writing $\mathbf{y} := \{y_i\}_{i \in \mathbb{B}_n}$. Since $\{Y_i\}_{i \in \mathbb{B}_n}$ is stationary, we have that $\mathbb{E}\left(\mathcal{L}V(\mathbf{Y})\right) = 0$, where $\mathcal{L}$ is the generator of the continuous time Markov process corresponding to our dynamics. This in particular yields that 
	\begin{align*}
	0 &= \lambda \sum_{i \in \mathbb{B}_n} \mathbb{E}[ ( (Y_i+1)^{k+1} - Y_i^{k+1}) ]+ \sum_{i \in \mathbb{B}_n} \mathbb{E}[ R_i( (Y_i-1)^{k+1}-Y_i^{k+1})] \\
	&= \lambda \sum_{i \in \mathbb{B}_n} \sum_{j=0}^{k} {k+1 \choose j} \mathbb{E}[Y_i^j] + \sum_{i \in \mathbb{B}_n} \sum_{j =0}^{k}   {k+1 \choose j} \mathbb{E}[ R_i Y_i^j (-1)^{k+1-j}  ] \\
	& = \sum_{i \in \mathbb{B}_n} (k+1)( \lambda \mathbb{E}[Y_i^k] - \mathbb{E}[R_i Y_i^k]) + \sum_{i \in \mathbb{B}_n} \sum_{j = 0}^{k-1} {k+1 \choose j} \mathbb{E}[  ( \lambda + R_i(-1)^{k+1-j}  ) Y_i^j].
	\end{align*}
	Now, rearranging the above equality, we obtain
	\begin{align*}
	\sum_{i \in \mathbb{B}_n} (k+1)( - \lambda \mathbb{E}[Y_i^k] + \mathbb{E}[R_i Y_i^k])  &=  \sum_{i \in \mathbb{B}_n} \sum_{j = 0}^{k-1} {k+1 \choose j} \mathbb{E}[  ( \lambda + R_i(-1)^{k+1-j}  ) Y_i^j] \\
&\stackrel{(a)}{\leq} \sum_{i \in \mathbb{B}_n}(\lambda+1) \sum_{j = 0}^{k-1} {k+1 \choose j} \mathbb{E}[ Y_i^j].
	\end{align*}
where step $(a)$ follows from the fact that $0 \leq R_i \leq 1$ for all $i \in \mathbb{B}_n$. Now, applying Lemma \ref{lem:internal2} to the LHS above,
\begin{equation*}
\sum_{i \in \mathbb{B}_n} (k+1) \left( - \lambda + \frac{1}{\sum_{k \in \mathbb{Z}^d}a_k}\right) \mathbb{E}[Y_i^k] \leq \sum_{i \in \mathbb{B}_n}(\lambda+1) \sum_{j = 0}^{k-1} {k+1 \choose j} \mathbb{E}[ Y_i^j].
\end{equation*}
Rearranging the last display concludes the proof as, by translation invariance, for all $1\leq j \leq k+1$ and all $i \in \mathbb{B}_n$, we have $\mathbb{E}[Y_i^j] = \mathbb{E}[Y_0^j]$. 

\end{proof}

\subsection{Proof of Corollary \ref{max}} 
\begin{proof}
Recall from Subsection \ref{lowproof} the coupling of $\{X_{i;\infty}(\cdot)\}_{i \in \mathbb{Z}^d}$ with a collection of stationary independent $M/M/1$ queues $\{Q_{i; \infty}(\cdot)\}_{i \in \mathbb{Z}^d}$, each queue having Poisson arrivals with rate $\lambda$ and departures with rate $1$, such that, almost surely, $X_{i;\infty}(t) \ge Q_{i;\infty}(t)$ for all $i \in \mathbb{Z}^d$ and $t \in \mathbb{R}$. For each $i \in \mathbb{Z}^d$ and $t \in \mathbb{R}$, the distribution of $1 + Q_{i;\infty}(t)$ is $\operatorname{Geometric}(1-\lambda)$ \cite{baccelli_bremaud}. Thus, for any $C < d/\log(1/\lambda)$,
\begin{align}\label{lowmax}
\mathbb{P}\left(\max_{i \in \mathbb{Z}^d : \|i\|_{\infty} \le N}X_i(t) \ge \lfloor C \log N \rfloor\right) &\ge \mathbb{P}\left(\max_{i \in \mathbb{Z}^d : \|i\|_{\infty} \le N}Q_{i;\infty}(t) \ge \lfloor C \log N \rfloor\right)\nonumber\\
& \ge 1 - \left(1 - \lambda^{C \log N}\right)^{(2N+1)^d}\nonumber\\
& \ge 1- \exp\{-2^dN^{C\log \lambda + d}\}
\rightarrow 1, \ \text{ as } N \rightarrow \infty.
\end{align}
Recall the constant $c_2$ appearing in the the upper bound of \eqref{probup}. For any $C' > \frac{d}{c_2}$, using the union bound and the upper bound in \eqref{probup},
\begin{equation}\label{upmax}
\mathbb{P}\left(\max_{i \in \mathbb{Z}^d : \|i\|_{\infty} \le N}X_i(t) > C' \log N\right) \le (2N+1)^d e^{-c_2 C'\log N} = (2N+1)^d N^{-c_2C'} \rightarrow 0,  \ \text{ as } N \rightarrow \infty.
\end{equation}
The corollary now follows from \eqref{lowmax} and \eqref{upmax}.

\end{proof}

\section{Proof of Theorem \ref{thm:corr_decay_general}}
\label{sec:proof_corr_decay}

\begin{proof}
	For this proof, let $\{X_i\}_{i \in \mathbb{Z}^d} \in \mathbb{N}_0^{\mathbb{Z}^d}$ be a sample from the stationary solution of the dynamics. Fix $K \in \mathbb{N}_0$. From the symmetry in the dynamics it suffices to prove \eqref{strongmix} for $h=1$. Moreover, it suffices to consider $f, g$ non-negative. The general case follows upon separately considering the positive and negative parts of $f,g$.
	\\
	
	We first consider bounded $f(\cdot)$ and $g(\cdot)$.
	As before, we will proceed via a version of the dynamics with a truncated interference sequence. Consider a sequence $L_n$, such that for every $n \in \mathbb{N}$, $L_n \in \mathbb{N}$ and $\lim_{n \to \infty}L_n = \infty$ and $\lim_{n \to \infty}\frac{L_n}{n} = 0$. Moreover, assume $n \mapsto L_n$ and $n \mapsto  \lfloor \frac{n}{2} \rfloor - L_n$ are non-decreasing in $n$ for $n \ge 2$. One valid choice of $\{L_n\}_{n \in \mathbb{N}}$ is $L_n := \sqrt{ \lfloor \frac{n}{2} \rfloor}, n \ge 1$.
	As before, for each $n \in \mathbb{N}$, denote the truncated interference sequence by $\{a_i^{L_n}\}_{i \in \mathbb{Z}^d}$, where $a_i^{L_n} := a_i\mathbf{1}_{\|i\|_{\infty} \leq L_n }$. 
	\\
	

		Let $n_0 \in \mathbb{N}$ be such that $n \geq 2L_n+2K + 2$ for all $n \ge n_0$. For $n \ge n_0$, define $\mathcal{X}^{(n)}:= \{ (z_1,\cdots,z_d) \in \mathbb{Z}^d : \lfloor \frac{n}{2} \rfloor -L_n \leq z_1 \leq \lceil \frac{n}{2} \rceil + L_n \}$.
		 Consider the CFTP construction (with the truncated interference sequence $\{a_i^{L_n}\}_{i \in \mathbb{Z}^d}$) of the dynamics, where the infinite system was started with all queues being empty at a time $-t \leq 0$ (i.e., $t$ is positive). From this all empty state at time $-t$ in the past, the dynamics is run in forward time with no arrivals at sites in $\mathcal{X}^{(n)}$ and independent PP($\lambda$) arrivals at other sites, and with departure rates governed by the truncated interference sequence $\{a_i^{L_n}\}_{i \in \mathbb{Z}^d}$.
		  For any $i \in \mathbb{Z}^d$, $n \ge n_0$ and $t \geq 0$, denote by $X_i^{(n;t)}$ the queue length at site $i$ at time $0$ for this system.
 Monotonicity in the dynamics implies that for each $i \in \mathbb{Z}^d$, the map $t \mapsto X_i^{(n:t)}$ is non-decreasing and hence an almost sure limit $X_i^{(n)}:= \lim_{t \rightarrow \infty} X_i^{(n;t)}$ exists. 
 In other words, the random variable $X_i^{(n)}$ is defined to be the queue length at site $i$, at time $0$, in the stationary regime of the infinite dynamics, constructed with the truncated interference sequence $\{a_i^{L_n}\}_{i \in \mathbb{Z}^d}$, and with the queues at sites in the set $\mathcal{X}^{(n)}$ ``frozen" without activity with $0$ customers at all time. For $n \ge n_0$, write $\X^{(n)}_{0,K} := \{X^{(n)}_i : i \in \mathbb{Z}^d, \|i\|_{\infty} \le K\}$ and $\X^{(n)}_{n\mathbf{e}_1,K} := \{X^{(n)}_i : i \in \mathbb{Z}^d, \|i - n\mathbf{e}_1\|_{\infty} \le K\}$.
Also, recall $\X_{0,K} := \{X_i : i \in \mathbb{Z}^d, \|i\|_{\infty} \le K\}$ and $\X_{n\mathbf{e}_1,K} := \{X_i : i \in \mathbb{Z}^d, \|i - n\mathbf{e}_1\|_{\infty} \le K\}$. 
\\

	We now collect several useful properties of the random variables $\X^{(n)}_{0,K}$ and $\X^{(n)}_{n\mathbf{e}_1,K}$. Under the synchronous coupling (same arrival and departure PPP), almost surely, 
	\begin{enumerate}
		\item For each $n \ge n_0$, $\X^{(n)}_{0,K} \leq \X_{0,K}$ and $\X^{(n)}_{n\mathbf{e}_1,K} \leq \X_{n\mathbf{e}_1,K}$ (here `$\le$' denotes co-ordinate-wise ordering).
		\item The map $n \mapsto \X^{(n)}_{0,K}$, $n \ge n_0$, is non-decreasing and $\lim_{n \rightarrow \infty} \X^{(n)}_{0,K} = \X_{0,K}$.
		\item For all $n \ge n_0$, $\X^{(n)}_{0,K}$ and $\X^{(n)}_{n\mathbf{e}_1,K}$ are independent and identically distributed. 
	\end{enumerate}
	
	
	The first property and the first part of the second property follow from monotonicity of the dynamics. To verify the limit in the second property, first note by monotonicity that $\X_{0,K}^{(\infty)} := \lim_{n \rightarrow \infty} \X_{0,K}^{(n)}$ exists and, by property 1, 
\begin{equation}\label{int1}
\X_{0,K}^{(\infty)} \le \X_{0,K}.
\end{equation}
We will now argue the reverse inequality.
For $L \in \mathbb{N}$, let $\{X_i^{(n),L} : i \in \mathbb{Z}^d\}$ denote the queue lengths at time $0$ under the CFTP construction for the stationary dynamics with the same arrival process $\mathcal{A}_i$ at sites $i \notin \mathcal{X}^{(n)}$, zero arrivals at sites in $\mathcal{X}^{(n)}$, and departure rate governed by the truncated interference sequence $(a_i^{L} := a_i \mathbf{1}_{\| i \|_{\infty} \le L})_{i \in \mathbb{Z}^d}$. Denote by $\{Z_i^{(\lfloor \frac{n}{2} \rfloor -L_n), L} : i \in \mathbb{Z}^d\}$ the queue lengths at time $0$ under the CFTP construction for the stationary dynamics with the same arrival process $\mathcal{A}_i$ at sites $i \in \mathbb{Z}^d$ with $\|i\|_{\infty} < \lfloor \frac{n}{2} \rfloor -L_n$, zero arrivals outside this set of sites, and departure governed by the interference sequence  $(a_i^{L})_{i \in \mathbb{Z}^d}$. Finally, denote by $\{X_i^L : i \in \mathbb{Z}^d\}$ the stationary queue lengths at time zero under the CFTP construction of the dynamics with arrival process $\mathcal{A}_i$ for all $i \in \mathbb{Z}^d$ but departure governed by the interference sequence  $(a_i^{L})_{i \in \mathbb{Z}^d}$.
As before, let $\X_{0,K}^{(n),L}  := \{X^{(n),L}_i : i \in \mathbb{Z}^d, \|i\|_{\infty} \le K\}$, $\Z_{0,K}^{(\lfloor \frac{n}{2} \rfloor -L_n), L} :=  \{Z^{(\lfloor \frac{n}{2} \rfloor -L_n),L}_i : i \in \mathbb{Z}^d, \|i\|_{\infty} \le K\}$ and $\X_{0,K}^{L}  := \{X^{L}_i : i \in \mathbb{Z}^d, \|i\|_{\infty} \le K\}$. 

By monotonicity, $\Z_{0,K}^{(\lfloor \frac{n}{2} \rfloor -L_n), L} \le \X_{0,K}^{(n),L} \le \X_{0,K}^{L}$ for all $n \ge n_0$, and hence,
\begin{equation}\label{int2}
\Z^{(\infty),L}_{0,K} := \lim_{n \rightarrow \infty}\Z_{0,K}^{(\lfloor \frac{n}{2} \rfloor -L_n),L} \le \X_{0,K}^{(\infty),L} :=  \lim_{n \rightarrow \infty} \X_{0,K}^{(n),L} \le \X_{0,K}^{L}.
\end{equation}
Moreover, as $\lfloor \frac{n}{2} \rfloor -L_n \rightarrow \infty$ as $n \rightarrow \infty$, by Proposition 7.3 of \cite{interf_q}, $\Z^{(\infty),L}_{0,K} = \X_{0,K}^{L}$ and hence, by \eqref{int2}, for any $L \in \mathbb{N}$,
\begin{equation}\label{int3}
\X_{0,K}^{(\infty),L} = \X_{0,K}^{L}.
\end{equation}
Again, by monotonicity, $\X^{(n),L}_{0,K} \le \X^{(n)}_{0,K}$ for all $n$ such that $L_n \ge L$, and hence, $\X^{(\infty),L}_{0,K} \le \X^{(\infty)}_{0,K}$. Hence, from \eqref{int3}, for any $L \in \mathbb{N}$,
\begin{equation}\label{int4}
\X_{0,K}^{L} \le \X^{(\infty)}_{0,K}.
\end{equation}
Finally, from the proof of Proposition 4.3 in \cite{interf_q}, almost surely, for any $i \in \mathbb{Z}^d$, $\lim_{L \rightarrow \infty} X^{L}_i = X_i$ and hence, by \eqref{int4},
\begin{equation}\label{int5}
\X_{0,K} \le \X^{(\infty)}_{0,K}.
\end{equation}
The limit in property 2 above now follows from \eqref{int1} and \eqref{int5}.


	To obtain the third property, note that as $a_i^{L_n} = 0$ for all $\|i\|_{\infty} > L_n$, there are no interactions between queues on either side of the frozen queue(s). Thus, $\X_{0,K}^{(n)}$ and $\X_{n\mathbf{e}_1,K}^{(n)}$ are independent. The identical distribution follows from the symmetry of the sites in $\{ i \in \mathbb{Z}^d, \|i\|_{\infty} \le K\}$ and $\{i \in \mathbb{Z}^d, \|i - n\mathbf{e}_1\|_{\infty} \le K\}$ with respect to the set $\mathcal{X}^{(n)}$ and the fact that $a_i = a_{-i}$, for all $i \in \mathbb{Z}^d$. 
\\

We now proceed as follows:
	\begin{align}
	\mathbb{E}[f(\X_{0,K}) g(\X_{n\mathbf{e}_1,K})] - \mathbb{E}[f(\X^{(n)}_{0,K})]\mathbb{E}[g(\X^{(n)}_{n\mathbf{e}_1,K})]  &=	\mathbb{E}[f(\X_{0,K})g(\X_{n\mathbf{e}_1,K})] - \mathbb{E}[f(\X_{0,K}^{(n)})g(\X_{n\mathbf{e}_1,K}^{(n)})]  \nonumber \\ 
	& = \mathbb{E}[f(\X_{0,K})(g(\X_{n\mathbf{e}_1,K}) - g(\X_{n\mathbf{e}_1,K}^{(n)}))]\nonumber\\
	&\qquad + \mathbb{E}[g(\X_{n\mathbf{e}_1,K}^{(n)})(f(\X_{0,K}) - f(\X_{0,K}^{(n)}))], \nonumber \\
	&= \mathbb{E}[f(\X_{n\mathbf{e}_1,K})(g(\X_{0,K}) - g(\X_{0,K}^{(n)}))] \nonumber\\
	&\qquad + \mathbb{E}[g(\X_{n\mathbf{e}_1,K}^{(n)})(f(\X_{0,K}) - f(\X_{0,K}^{(n)}))]. \label{eqn:corr_inter}
	\end{align}
	The first equality follows since $\X_{0,K}^{(n)}$ and $\X_{n\mathbf{e}_1,K}^{(n)}$ are independent random variables. The second equality follows from adding and subtracting $\mathbb{E}\left(f(\X_{0,K})g(\X_{n\mathbf{e}_1,K}^{(n)})\right)$. The third equality follows as, by the symmetry of the sites in $\{ i \in \mathbb{Z}^d, \|i\|_{\infty} \le K\}$ and $\{i \in \mathbb{Z}^d, \|i - n\mathbf{e}_1\|_{\infty} \le K\}$ with respect to the set $\mathcal{X}^{(n)}$ and the fact that $a_i = a_{-i}$, for all $i \in \mathbb{Z}^d$, the law of $(\X_{0,K},\X_{n\mathbf{e}_1,K}, \X_{n\mathbf{e}_1,K}^{(n)})$ is the same as that of $(\X_{n\mathbf{e}_1,K},\X_{0,K}, \X_{0,K}^{(n)})$. 
	\\

	As both $f,g$ are bounded functions on $\mathbb{N}_0^{(2K+1)^d}$ and $X_i$ and $X^{(n)}_i$ are integer valued random variables for all $i \in \mathbb{Z}^d$, using properties $2$ and $3$ above, dominated convergence theorem yields $\lim_{n \rightarrow \infty} \mathbb{E}[f(\X_{0,K}^{(n)})] = \mathbb{E}[f(\X_{0,K})]$, 
	$$
	\lim_{n \rightarrow \infty} \mathbb{E}[g(\X_{n\mathbf{e}_1,K}^{(n)})] = \lim_{n \rightarrow \infty} \mathbb{E}[g(\X_{0,K}^{(n)})] = \mathbb{E}[g(\X_{0,K})].
	$$
and
	\begin{align*}
	\lim_{n \rightarrow \infty}  \mathbb{E}[f(\X_{n\mathbf{e}_1,K})(g(\X_{0,K}) - g(\X_{0,K}^{(n)}))] = 0 = \lim_{n \rightarrow \infty}\mathbb{E}[g(\X_{n\mathbf{e}_1,K}^{(n)})(f(\X_{0,K}) - f(\X_{0,K}^{(n)}))].
	\end{align*}
Using these limits in \eqref{eqn:corr_inter}, we obtain \eqref{strongmix} for all bounded functions $f$ and $g$.\\

Now, we consider general $f$ and $g$. For any $\epsilon>0$, there exist simple functions $f^{(\epsilon)}$ and $g^{(\epsilon)}$ such that $\mathbb{E}\left(f(\X_{0,K}) - f^{(\epsilon)}(\X_{0,K}) \right)^2 < \epsilon^2$ and $\mathbb{E}\left(g(\X_{0,K}) - g^{(\epsilon)}(\X_{0,K}) \right)^2 < \epsilon^2$. Now,
\begin{align}\label{gen1}
\left|\mathbb{E}\left(f(\X_{0,K})g(\X_{n\mathbf{e}_1,K})\right)\right. & \left. - \mathbb{E}\left(f(\X_{0,K})\right)\mathbb{E}\left(g(\X_{n\mathbf{e}_1,K})\right)\right|\nonumber\\
&\le \left|\mathbb{E}\left(f(\X_{0,K})g(\X_{n\mathbf{e}_1,K})\right) - \mathbb{E}\left(f^{(\epsilon)}(\X_{0,K})g^{(\epsilon)}(\X_{n\mathbf{e}_1,K})\right)\right|\nonumber\\
&\qquad + \left|\mathbb{E}\left(f^{(\epsilon)}(\X_{0,K})g^{(\epsilon)}(\X_{n\mathbf{e}_1,K})\right) - \mathbb{E}\left(f^{(\epsilon)}(\X_{0,K})\right)\mathbb{E}\left(g^{(\epsilon)}(\X_{n\mathbf{e}_1,K})\right)\right|\nonumber\\
&\qquad + \left|\mathbb{E}\left(f^{(\epsilon)}(\X_{0,K})\right)\mathbb{E}\left(g^{(\epsilon)}(\X_{n\mathbf{e}_1,K})\right) - \mathbb{E}\left(f(\X_{0,K})\right)\mathbb{E}\left(g(\X_{n\mathbf{e}_1,K})\right)\right|.
\end{align}
By triangle inequality, Cauchy-Schwartz inequality and translation invariance of the dynamics,
\begin{align}\label{gen2}
\big|\mathbb{E}\left(f(\X_{0,K})g(\X_{n\mathbf{e}_1,K})\right) & - \mathbb{E}\left(f^{(\epsilon)}(\X_{0,K})g^{(\epsilon)}(\X_{n\mathbf{e}_1,K})\right)\big|\nonumber\\
& + \left|\mathbb{E}\left(f^{(\epsilon)}(\X_{0,K})\right)\mathbb{E}\left(g^{(\epsilon)}(\X_{n\mathbf{e}_1,K})\right) - \mathbb{E}\left(f(\X_{0,K})\right)\mathbb{E}\left(g(\X_{n\mathbf{e}_1,K})\right)\right|\nonumber\\
&\le 2\sqrt{\mathbb{E}\left(f(\X_{0,K})\right)^2}\sqrt{\mathbb{E}\left(g(\X_{0,K}) - g^{(\epsilon)}(\X_{0,K})\right)^2}\nonumber\\
&\qquad + 2\sqrt{\mathbb{E}\left(g^{(\epsilon)}(\X_{0,K})\right)^2}\sqrt{\mathbb{E}\left(f(\X_{0,K}) - f^{(\epsilon)}(\X_{0,K})\right)^2}\nonumber\\
&\le 2\epsilon \left(\sqrt{\mathbb{E}\left(f(\X_{0,K})\right)^2} + \sqrt{\mathbb{E}\left(g(\X_{0,K})\right)^2} + \epsilon\right).
\end{align}
Moreover, as $f^{(\epsilon)}$ and $g^{(\epsilon)}$ are bounded,
\begin{equation}\label{gen3}
\lim_{n \rightarrow \infty} \left|\mathbb{E}\left(f^{(\epsilon)}(\X_{0,K})g^{(\epsilon)}(\X_{n\mathbf{e}_1,K})\right) - \mathbb{E}\left(f^{(\epsilon)}(\X_{0,K})\right)\mathbb{E}\left(g^{(\epsilon)}(\X_{n\mathbf{e}_1,K})\right)\right| = 0.
\end{equation}
Using \eqref{gen2} and \eqref{gen3} in \eqref{gen1}, we obtain
\begin{multline*}
\limsup_{n \rightarrow \infty}\left|\mathbb{E}\left(f(\X_{0,K})g(\X_{n\mathbf{e}_1,K})\right) - \mathbb{E}\left(f(\X_{0,K})\right)\mathbb{E}\left(g(\X_{n\mathbf{e}_1,K})\right)\right|\\
\le 2\epsilon \left(\sqrt{\mathbb{E}\left(f(\X_{0,K})\right)^2} + \sqrt{\mathbb{E}\left(g(\X_{0,K})\right)^2} + \epsilon\right).
\end{multline*}
As $\epsilon>0$ is arbitrary, this completes the proof of \eqref{strongmix}.\\

Take any $h \in \{1,\cdots, d\}$. Upon taking $f$ and $g$ to be indicator functions of cylinder sets $\mathcal{F}_0 \subset \mathcal{B}\left(\mathbb{N}_0^{\mathbb{Z}^d}\right)$, \eqref{strongmix} shows that \eqref{sm} holds for all $A, B \in \mathcal{F}_0$. A standard argument using the `good sets principle' can now be used to conclude that \eqref{sm} holds for all $A, B \in \mathcal{B}\left(\mathbb{N}_0^{\mathbb{Z}^d}\right)$. This shows that $\mathcal{Q}_h$ is strongly mixing for all $h \in \{1,\cdots, d\}$. Hence, $\{T_h\}_{h=1}^d$ is ergodic.

\end{proof}

\textbf{Acknowledgements} - AS thanks SB for hosting him at UNC Chapel Hill, where a large part of this work was done.  Most of this work was done when AS was a PhD student at UT Austin and he
thanks Fran\c cois Baccelli for supporting him through the Simons Foundation grant (\# $197892$) awarded to The University of Texas at Austin. SB was partially supported by a Junior Faculty Development Award made by UNC, Chapel Hill.

\bibliographystyle{plain}
\bibliography{interference-queues} 

\end{document}